\documentclass[twoside,a4paper,11pt]{article}
\usepackage[top=4cm, bottom=4cm, left=3cm, right=3cm]{geometry}

\usepackage{macros}

\renewcommand{\phi}{\varphi}

\newcommand{\pr}{\mathrm{pr}}
\newcommand{\reg}{\mathrm{reg}}
\newcommand{\sing}{\mathrm{sing}}

\usepackage{amsmath}               
  {
      \theoremstyle{plain}
      \newtheorem{assumption}{Assumption}
      
  }

\begin{document}
\title{On the implicit regularization of Langevin dynamics with projected noise}
\author{
Govind Menon\thanks{Division of Applied Mathematics, Brown University and Institute for Advanced Study, Princeton
\texttt{govind\_menon@brown.edu}.}
\and
Austin J.\ Stromme\thanks{Department of Statistics, CREST, ENSAE, IP Paris, \texttt{austin.stromme@ensae.fr}.}
\and
Adrien Vacher\thanks{Department of Statistics, CREST, ENSAE, IP Paris, \texttt{adrien.vacher@ensae.fr}.}
}

\maketitle

\begin{abstract}
  We study Langevin dynamics with noise projected
    onto the directions orthogonal to an isometric group action. 
    This mathematical model is introduced to shed new light on 
    the effects of symmetry on stochastic gradient descent for over-parametrized models.
    Our main result
    identifies a novel form of implicit regularization:
    when the initial and target density are both invariant
    under the group action,
    Langevin dynamics with projected noise
    is equivalent in law
    to Langevin dynamics with isotropic diffusion but with an additional
    drift term proportional to the negative log volume
    of the group orbit.
    We prove this result
    by constructing a coupling
    of the two processes via
    a third process on the group itself,
    and identify
    the additional drift as the mean curvature of the orbits.
\end{abstract}

\section{Introduction}

A central feature of
modern machine learning is that models are often
heavily over-parameterized yet still
achieve excellent generalization performance.
For example, when the model has more parameters
than training data points,
classical statistical 
theory suggests that in this case one must add regularization to the training method to avoid overfitting.
However, this theory is widely known to conflict with practice, where
such models can and do generalize well without explicit
regularization~\cite{zhang2021understanding}.
Even when the model has fewer parameters than data points,
there is typically still over-parameterization coming from
the model architecture itself, in the sense
that there are parameter changes which do not affect the model output.

The by now widely accepted explanation for the fact
that over-parameterized models can generalize well
is that
the optimization methods used to train the model
{\it implicitly regularize} it by biasing it towards simpler solutions which generalize better~\cite{neyshabur2014search}.
However, despite significant
efforts,
the precise nature and mechanism of this implicit
regularization is not fully understood. 
And importantly, most theory for implicit regularization
applies either only to gradient descent or equally to stochastic
gradient descent (SGD), largely leaving open the question 
of the effect of stochasticity in particular on implicit regularization.
This is
a fundamental question due to the ubiquity of stochastic gradient
descent in practice.

In this work, we introduce a continuous
time model for SGD
in the case where the over-parameterization
arises from
the model architecture itself.
The primary motivation for our work
is the simple observation that
when the model is over-parameterized in this way,
{\it the stochastic gradients will only point in directions which are orthogonal to the over-parameterization}.

\paragraph*{Over-parameterization via group symmetries.}
Our main structural assumption is that
the over-parameterization can be described
by a group symmetry.
For concreteness, suppose that
$f(x) = \E_{z \sim \mc P}[L(x, z)]$ where $L(x ,z)$ is the loss
incurred on the data $z \sim \mc P$ with model parameter $x$.
We model the over-parameterization
by assuming that there is a group $G$ acting on $\R^d$
such that $L(x, z) = L(g\cdot x, z)$ for all $g \in G$
and $z \in \supp(P)$.
Define the orbit $\mc O_x := \{g\cdot x \colon g \in G\}$.
Then the training gradients $\{\nabla_x L(x, z) \colon z \in \supp(P)\}$
will all be orthogonal to the tangent space $T_x \mc O_x$,
and thus so will be any stochastic gradient.
Many deep learning models indeed have group symmetries.
For example, ReLU units are homogeneous,
and attention layers are invariant under
certain matrix multiplications.
Our setting includes, in particular,
some well-known simplified models
of over-parameterization reviewed in
Section~\ref{subsec:main_result}.

Analyzing implicit bias for SGD in this setting is challenging
because it requires a fine-grained
understanding of the training dynamics for highly non-convex
objectives.
We thus propose a simplified continuous-time
model of SGD which nonetheless permits us
to gain insight into the effect of noise
in directions
orthogonal to the over-parameterization.
In particular, let $P_x$ denote the orthogonal
projection on $(T_x \mc O_x)^{\perp}$, and 
$Q_x := I - P_x$ its complement.
We consider
\begin{equation}\label{eqn:projected_Langevin_continuous}
dX_t = -\nabla f(X_t) dt + \sqrt{2}(\alpha(X_t)P_{X_t}
+ \beta(X_t)Q_{X_t})dB_t,
\end{equation}
where $\alpha, \beta$ control the
strength of the projection onto the $P$ and $Q$ directions.

Observe that when $\alpha = \beta = 1$,
the diffusion matrix is the identity,
and so~\eqref{eqn:projected_Langevin_continuous}
reduces to standard Langevin dynamics, which 
is a common simplified model of stochastic gradient 
descent~\cite{raginsky2017non}; in this case the stationary distribution
is proportional to $e^{-f}$.
By contrast, when the diffusion matrix
is not the identity, the stationary distribution
can generally only be described indirectly as the
solution of a PDE.
The goal of this paper is to understand the effect on implicit regularization of noise in directions
orthogonal to the over-parameterization
by studying~\eqref{eqn:projected_Langevin_continuous}
in the case where
$\alpha \neq \beta$.

\paragraph*{Main contribution.}
Assuming that the group $G$ acts by isometries,
our main result, Theorem~\ref{thm:main}, establishes
a novel equivalence
between the dynamics~\eqref{eqn:projected_Langevin_continuous}
and different dynamics with an isotropic diffusion matrix but with an additional
entropic drift term.
In particular, we show that under certain
regularity assumptions on $f,\alpha$
and $\beta$,
and assuming that the dynamics are initialized at a $G$-invariant
measure, the dynamics~\eqref{eqn:projected_Langevin_continuous}
are equivalent in marginal law to the dynamics
$$
dY_t = -(\nabla f(Y_t) + (\alpha(Y_t)^2 - \beta(Y_t)^2)\nabla \log \vol \mc O_{Y_t})dt + \sqrt{2} \alpha(Y_t) dB_t,
$$ where the orbit volume $\vol \mc O_y$ is computed with respect
 to the induced Riemannian geometry on the orbit $\mc O_y$.
 In other words, the equivalent dynamics
 have an isotropic diffusion matrix but an additional
 drift term proportional to the {\it negative} gradient
 of the log volume of $\mc O_x$.
Our result reveals that having more noise in the
$P$ versus $Q$ directions leads to
a novel type of implicit regularization of the SDE~\eqref{eqn:projected_Langevin_continuous} towards
points with a small orbit as measured by their embedded volume.
Furthermore, our proofs identify that this new phenomenon is essentially
geometric: 
 the drift term $\nabla \log \vol \mc O_{x}$
 is exactly the negative mean curvature of the orbit $\mc O_x$ at $x$.
 In summary, our results shed new light on 
 the relationship
 between the architecture of a model
 and implicit regularization, by using
 tools from geometry and stochastic calculus to identify a novel type of
 symmetry-specific regularization.

\paragraph*{Organization of the paper.}
This paper is organized as follows. In the remainder
of this section we discuss related work.
In Section~\ref{sec:prelim}
we cover relevant background material
from geometry, the theory of group actions, and stochastic calculus.
In Section~\ref{sec:main_results}, we state our main result,
Theorem~\ref{thm:main}, provide discussion, and consider
several examples.
In Section~\ref{sec:proof_overview},
we give an overview of our proof of Theorem~\ref{thm:main}.
The Appendix then collects proofs that were omitted
from the main text.

\paragraph*{Related work.}
For models trained with a logistic-type loss,
there is a large literature dedicated
to understanding implicit bias by showing
convergence in direction
to max-margin solutions;
first for logistic regression~\cite{soudry2018implicit} and then for deep linear networks~\cite{ji2018gradient}, homogeneous
networks~\cite{lyu2019gradient},
and most recently for so-called nearly-homogeneous networks~\cite{cai2025implicit}.
The square loss setting is not as well understood,
with the prevailing model being deep linear neural networks.
An influential conjecture~\cite{gunasekar2017implicit}
states that certain two layer linear neural networks implicitly 
minimize nuclear norm;
this conjecture was established
under some conditions~\cite{li2018algorithmic}.
The work~\cite{arora2019implicit}
suggested that, for general deep linear networks, implicit regularization is not
captured by any matrix norm.

Comparatively less is known about
the effects of stochasticity in particular on implicit bias.
A widely known concept is
that stochastic algorithms favor so-called flat minima~\cite{keskar2016large}, which tend to generalize
better.
Due to the difficulty of developing a detailed
understanding of stochastic
gradient descent dynamics, a
common approach, and the one we take
in this paper, is to invoke
the central limit theorem
and resort to an SDE approximation~\cite{cheng2020stochastic}.
An early work studied the implicit bias of
such an SDE for linear regression~\cite{ali2020implicit}.
In the case of diagonal linear
neural networks, the implicit bias can be
exactly characterized~\cite{pesme2021implicit,even2023s}.
For an SDE model of
two layer linear neural network training,
formulas for the evolution of the
singular values and determinant
were derived in~\cite{varre2024sgd};
in particular, it was shown
that the determinant decreases deterministically, in contrast
to the gradient flow dynamics, suggesting
implicit regularization.

As we discuss more in Section~\ref{sec:prelim},
the symmetries of $G$ lead to a
natural quotient space $\R^d/G$,
and, away from certain degenerate orbits,
the quotient map is a Riemannian submersion.
Our work is therefore closely related to the study
of Langevin dynamics on manifolds in the presence
of a Riemannian submersion. 
Early work studied the image of Brownian
motion (with identity diffusion)
under a Riemannian submersion and identified the presence of a mean curvature correction in the image space~\cite{pauwels1990riemannian,carne1990geometry}.
The Riemannian
submersion machinery has been
extensively applied to analyze
the geometry of certain quotients of Euclidean
space in the literature on shape spaces,
see~\cite{le1993riemannian} and subsequent references.

This submersion approach was 
later used to construct Dyson Brownian motion~\cite{huang2023motion}
(see also Example~\ref{ex:eigenvalues}),
and Langevin dynamics on the Bures-Wasserstein
space~\cite{yu2023riemannian},
as well as to study the effective dynamics of
deep linear networks~\cite{menon2024geometry}.
In the settings
of both~\cite{huang2023motion,menon2024geometry}, it was found that
for $\alpha, \beta$ constant,
an SDE with anisotropic diffusion matrix $\alpha P + \beta Q$
projects to an SDE in the quotient space
with an additional drift of the form $-\beta^2\nabla \log \vol \mc O$
and diffusion scaled by $\alpha$.

Our work is inspired by the works~\cite{huang2023motion,menon2024geometry}, but differs
from them in the following important ways.
First of all, our main observation
Theorem~\ref{thm:main} is novel.
Secondly, rather than passing to the quotient
space $\R^d/G$, we work on
$\R^d$ and thus use entirely
different proof techniques.
One benefit of staying on $\R^d$ 
is that
it allows us to avoid technical 
issues coming from the fact that the quotient
space is, in general, a stratified singular
space (see Section~\ref{subsec:stratification}). 
Theorem~\ref{thm:main} is also free
of stopping times, while those previous
works required stopping times to deal
with the SDE hitting the boundary of the quotient space.
Another important benefit of working
on $\R^d$ is conceptual:
our coupling based proof of Theorem~\ref{thm:main}
in Section~\ref{sec:proof_overview}
provides a new and complementary intuition to that
coming from the submersion
picture which may be of further use.
Lastly,
as in these previous works,
we rely on
Proposition~\ref{prop:log_volume_mean_curvature},
connecting 
the mean curvature to the log-volume,
from~\cite{carne1990geometry}.

The concurrent work~\cite{anonymous2025quotientspace}
takes a submersion perspective similar
to~\cite{huang2023motion,menon2024geometry}
and obtains results closely related to our own.
Their main result states that,
in the presence of a Lie group $H$ with
proper, free isometric action
on a Riemannian manifold $M$,
Langevin dynamics with
projected noise is equivalent, on the quotient
space $M / H$, to the projection
of Langevin dynamics with identity diffusion
but an additional drift corresponding
to the mean curvature.
As we review in Section~\ref{subsec:stratification},
there are no non-trivial subgroups of $O(d)$
with free action
on $\R^d$, so their theory does not apply to our
setting.
Moreover, they neither conclude
equivalence at the level of $M$,
nor do they relate the mean curvature to the 
orbit volume, nor do they interpret
the result as implicit regularization.

 \paragraph*{Notation.}
 We write the set of $d \times d$ 
 real orthogonal matrices $O(d)$,
 and those with determinant one
 as $SO(d)$. When discussing an SDE
 of the form $dZ_t = -A(Z_t)dt + \sqrt{2}B(Z_t)dB_t$
 with $Z_t, A(Z_t) \in \R^k$ and
 $B(Z_t) \in \R^{k \times k}$,
 we refer to $B(Z_t)$ as the {\it diffusion matrix}.
 For a random variable
 $X \in \R^d$,
 we write its law $\mathcal{L}(X)$.
\section{Preliminaries}

\label{sec:prelim}

In this section, we collect some preliminary material
on isometric group actions on $\R^d$, the orbit volume,
and stochastic calculus.
For general background in Riemannian geometry and the basics
of Lie group theory,
we refer to the books~\cite{lee2013introduction,lee2018introduction,hall2013lie}.
For background on stochastic calculus, we refer the reader
to the books~\cite{hsu2002stochastic,oksendal2013stochastic}.

\subsection{Isometric group action assumption
and background on Lie groups}
\label{subsec:group_actions}

For a thorough introduction to Lie group theory,
we refer to the textbook~\cite{hall2013lie}.
A Lie group $G$ is a smooth manifold with
a group structure such that the group operations are smooth.
The only non-trivial fact we shall use from Lie group theory
is that a compact Lie group $G$ has a Haar measure:
a measure $\mu$, invariant under right and left-multiplication
by group elements, which is unique up to scaling~\cite[Proposition 16.10]{lee2013introduction}.
We write the unique Haar measure with
$\int_G \mu = 1$ as $\mathrm{unif}_G$.

Throughout this work, we make the following assumption.
\begin{assumption}[$f$ is invariant under an isometric group action]
\label{assum:isometric}
We assume that there exists a closed Lie group $G\subset O(d)$ acting
by isometries on $\R^d$, such that $f(g\cdot x) = f(x)$ for
all $g \in G$.
\end{assumption}
For simplicity, we will identify $G$ with its matrix
representation, so one may concretely think of $G$
as an embedded submanifold of matrices $G \subset \R^{d \times d}$,
closed under matrix multiplication and inversion,
such that $gg^\top = I$ for all $g \in G$.
The projection matrix $P$ is defined as
the orthogonal projection onto $(T_x \mc O_x)^{\perp}$.

\subsection{Stratification of $\R^d$ by orbit type}
\label{subsec:stratification}
In this work we will use the theory
of {\it orbit types} to deal with
certain singular orbits, and to this
end here recall some basic
elements from the theory of group actions
on Riemannian manifolds, following~\cite[Chapter 2]{berndt2016submanifolds}.

The {\it orbit} of a point $x \in \R^d$ is defined
by
$$
\mc O_x := \{g\cdot x \colon g \in G\}.
$$
The stabilizer of a point $x \in \R^d$
is
$$
G_x :=\{ g \in G \colon g\cdot x = x\}.
$$
For compact groups acting
on manifolds, the best case
is when the group acts freely, meaning that 
the stabilizer is the identity, $G_x = \{e\}$,
for all $x$~\cite[Theorem 21.10]{lee2018introduction}.
The main benefit of free actions
in this case
is that the quotient space can be given a Riemannian
structure such that the projection map is a Riemannian
submersion.

Unfortunately, since the stabilizer of $0$ is
the whole group, there
are no non-trivial subgroups of $O(d)$ with free action on $\R^d$.
The lack of free actions means
that the quotient space $\R^d/G$ will always have a boundary,
and potentially a singular boundary,
which rules out a simple Riemannian submersion picture (see also Example~\ref{ex:bures--wasserstein}).
In practical terms, this means that the
matrices $P, Q$ and the volume $\vol \mc O_x$
will not be smooth on all of $\R^d$, having instead
singularities, which then must be dealt with to establish
rigorous theory.

These singularities
can be succinctly described
by considering {\it orbit types}.
For two orbits $\mc O_x$ and $\mc O_y$,
we say they have the same type if and only if 
$G_x$ and $G_y$ are conjugate.
This yields a partial order
by defining $\mc O_x < \mc O_y$
if and only if $G_x$ is conjugate to a subgroup
of $G_y$.
With this partial order, there is a unique
minimal orbit type, the {\it principal}
orbit type. These orbits have maximal dimension
and form an open and dense subset of $\R^d$~\cite[Proposition 2.2.4]{berndt2016submanifolds}.
Orbit types which have maximal dimension but are not
principal are termed {\it exceptional},
and the remaining orbit types are {\it singular}.

We write the set of principal orbits as $\R^d_{\pr}$,
and the set of singular orbits as $\R^d_{\sing}$.
We call the union of the principal and exceptional orbits
the ``regular" orbits, and write it as $\R^d_{\reg}$.
Notice that $\R^d_{\reg}$
is exactly the set of orbits of maximal dimension.

The main fact we shall require about orbit types
is that the projection operator $P$ 
(and thus $Q$) is smooth on the regular orbits $\R^d_{\reg}$.
\begin{prop}[Smoothness of $P$ and $Q$ away from
singular orbits]
\label{prop:smoothness_of_P}
Suppose $x \in \R^d_{\reg}$.
Then there is a smooth orthonormal frame $V_1, \ldots, V_m$,
defined on a neighborhood of $x$, such that for 
each $y \in U$, $V_1, \ldots, V_m$ spans $T_y\mc O_y$.
In particular, $P$ and $Q$ are smooth in a neighborhood of
$x$, and $\R^d_{\reg}$ is an open set.
\end{prop}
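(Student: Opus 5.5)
The plan is to build the frame from the infinitesimal generators of the action. Let $\mathfrak g \subset \R^{d\times d}$ be the Lie algebra of $G$, consisting of skew-symmetric matrices since $G \subset O(d)$, and fix a basis $A_1,\dots,A_k$ of $\mathfrak g$. For every $y\in\R^d$ the orbit map $g \mapsto g\cdot y$ has differential at the identity $A\mapsto Ay$. Because this map is equivariant, it has constant rank, so its image is the whole tangent space. Hence
$$T_y\mc O_y = \mathrm{span}\{A_1 y,\ldots,A_k y\}.$$
The vector fields $W_i(y) := A_i y$ are linear in $y$, and in particular smooth on all of $\R^d$.

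Next, I use the maximal-dimension characterization. Let $m$ be the maximal orbit dimension, so that $\R^d_{\reg}=\{y : \dim \mc O_y = m\}$. Then $\dim T_y\mc O_y = \operatorname{rank}[W_1(y)\,\cdots\,W_k(y)] \le m$ for every $y$, with equality exactly on $\R^d_{\reg}$. At $x\in\R^d_{\reg}$, choose indices $i_1,\dots,i_m$ such that $W_{i_1}(x),\dots,W_{i_m}(x)$ are linearly independent. The map $y\mapsto \det\big(M(y)^\top M(y)\big)$, with $M(y)=[W_{i_1}(y)\,\cdots\,W_{i_m}(y)]$, is a polynomial, so it stays nonzero on a neighborhood $U$ of $x$. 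On $U$ these $m$ vectors are therefore independent and lie in $T_y\mc O_y$. Since that space has dimension at most $m$, they span it. This gives $U\subset\R^d_{\reg}$, which proves that $\R^d_{\reg}$ is open.

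Finally, I apply Gram--Schmidt to $W_{i_1},\dots,W_{i_m}$ on $U$. Gram--Schmidt involves only inner products, divisions by norms that do not vanish on $U$, and square roots of positive quantities, so it produces a smooth orthonormal frame $V_1,\dots,V_m$ spanning $T_y\mc O_y$ for each $y\in U$. Then $Q_y=\sum_{j=1}^m V_j(y)V_j(y)^\top$ and $P_y = I - Q_y$ are smooth on $U$.

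The one step that needs care is the claim $T_y\mc O_y=\mathfrak g\cdot y$, together with the fact that $\mc O_y$ is an embedded submanifold of dimension equal to this rank. This is the standard fact that orbits of a compact (closed in $O(d)$, hence compact) Lie group action are embedded submanifolds diffeomorphic to $G/G_y$, with tangent space given by the image of the Lie algebra. I would cite it from \cite{lee2013introduction} or \cite[Chapter 2]{berndt2016submanifolds} rather than reprove it. Everything after that is the lower semicontinuity of rank plus Gram--Schmidt.
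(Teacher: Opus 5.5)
Your proof is correct and matches the paper's argument step for step: the generators $A_i y$ from the Lie algebra, nonvanishing of $\det(B_y^\top B_y)$ near $x$, maximality of $m$ to get spanning, then Gram--Schmidt and $Q_y=\sum_j V_j(y)V_j(y)^\top$. You also make explicit two points the paper leaves implicit: that $U\subset\R^d_{\reg}$, which is what actually gives openness, and the standard fact $T_y\mc O_y=\mathfrak g\cdot y$.
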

The proof of this result is included in Appendix~\ref{appendix:prelim_proofs}.

\subsection{Mean curvature and orbit volume}
For a regular orbit $\mc O_x$, we write its volume as an 
embedded submanifold of $\R^d$ as 
$\vol \mc O_x$, and then define $\vol \mc O_x := 0$ 
if $\mc O_x$ is a singular orbit.
The orbit volume $\vol \mc O_x$ is closely related to
the geometry of the orbit as an embedded Riemannian
manifold, and in particular to its mean curvature.
Recall that for an embedded Riemannian manifold $N \subset \R^d$,
its second fundamental form
is a tensor which maps $v, w \in T_xN$ to 
$$
h^N_x(v, w) := P^N_x(\nabla_{V} W ),
$$ where $V, W$ are smooth extensions
of $v, w$ respectively, and $P^N_x$ is the projection
onto the normal space $(T_x N)^{\perp}$ at $x$.
Using the second fundamental form, one can define
the {\it mean curvature}, a fundamental
extrinsic geometric object.
Given an orthonormal basis $v_1, \ldots, v_m$ 
for $T_xN$,
the mean curvature is defined to be
$$
H^N(x) := \sum_{i =1 }^m h_x^N(v_i, v_i).
$$ When $N$ is an orbit $\mc O_x$, we will generally
suppress it and simply write the mean curvature
and second fundamental form as $H(x)$ and $h_x$, respectively.

We then have the following important observation on the regularity
 of $\vol \mc O_x$ and its connection to mean curvature, which seems
 to have first appeared in~\cite{carne1990geometry}, see also~\cite{pacini2003mean}.
\begin{prop}[Smoothness and gradient of the log-volume]\label{prop:log_volume_mean_curvature}
The orbit volume
$\vol \mc O_x$ is smooth on the regular orbits $\R^d_{\reg}$.
Moreover, on the regular orbits $\R^d_{\reg}$, we have the identity
\begin{equation}
\label{eqn:mc-entropy}
H(x)= 
-\nabla \log \vol \mathcal{O}_x,
\end{equation}
where $H(x)$ is the mean curvature vector of $\mc O_x$ at $x$.
\end{prop}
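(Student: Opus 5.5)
We compute the first variation of the orbit volume along a smooth curve of nearby regular orbits; this is the classical first variation formula. Fix $x \in \R^d_{\reg}$ and $v \in \R^d$. Since $G$ acts by isometries, the orbit $\mc O_y$ is the image of $G$ under the orbit map $\phi_y(g) = g\cdot y$, which induces a diffeomorphism $G/G_y \to \mc O_y$. We use the Haar measure to write
$$
\vol \mc O_y = \frac{1}{|G_y|}\int_G J(g, y)\, d\mathrm{unif}_G(g)\cdot c,
$$
where $J(g,y)$ is the Jacobian of $\phi_y$ at $g$ and $c$ is a normalizing constant. By equivariance $J(g,y) = J(e,y)$: since $dg$ is an isometry and left translation preserves Haar measure, the integrand is constant on $G$. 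Concretely, pick a basis $\xi_1,\dots,\xi_k$ of the Lie algebra $\mathfrak g \subset \mathfrak{so}(d)$. Then
$$
\vol \mc O_y = \frac{C}{|G_y \cap \text{component}|}\sqrt{\det \Gamma(y)}, \qquad \Gamma(y)_{ij} = \langle \xi_i y, \xi_j y\rangle
$$
on a complement to the stabilizer algebra. A cleaner route is to choose a fixed subspace $\mathfrak m \subset \mathfrak g$ complementary to $\mathfrak g_x$, and work with its basis $\xi_1,\dots,\xi_m$, $m = \dim \mc O_x$. Since $y \mapsto \xi_i y$ are smooth vector fields, linearly independent near $x$ by Proposition~\ref{prop:smoothness_of_P}, the Gram determinant $\det\Gamma(y)$ is smooth and positive near $x$.

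\emph{Step 1 (smoothness).} The main obstacle is the stabilizer cardinality factor: $\vol \mc O_y = \vol(G/G_y)$-type constant times $\sqrt{\det\Gamma(y)}$, and $|G_y/G_y^0|$ may jump between principal and exceptional orbits. I would avoid this entirely by invoking the slice theorem: near $x$, the orbits are modelled on $G\times_{G_x} S$ with $S$ a normal slice. Then $\mc O_x$ is covered by nearby orbits $\mc O_y$, $y\in S$, via a local $|G_x/G_y|$-to-one covering. Even so, exceptional points still cause a jump. Let me instead show smoothness on $\R^d_{\pr}$ first, where $G_y$ is locally conjugate to a fixed group and the constant is locally constant. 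Then I would check that the normal variation formula below makes sense in the exceptional case using the smooth function $\sqrt{\det\Gamma}$ with the covering factor, accepting that the precise constant is locally constant there. I would try first to argue that $G_y = G_x$ for $y$ in the slice at $x$ near $x$ when the dimensions agree, modulo the finite-index issue.

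\emph{Step 2 (gradient).} Since the integrand is constant on the orbit, $\log\vol\mc O_y = \tfrac12\log\det\Gamma(y) + \text{const}$ locally. Differentiating in direction $v$ gives
$$
\partial_v \log\vol\mc O_y = \tfrac12\operatorname{tr}(\Gamma^{-1}\partial_v\Gamma) = \sum_{i,j}\Gamma^{ij}\langle \xi_i v, \xi_j y\rangle.
$$
Choosing $\xi_i$ such that the $\xi_i x$ are orthonormal at $y=x$, this equals $\sum_i \langle \xi_i v, \xi_i x\rangle = -\sum_i\langle v, \xi_i^2 x\rangle$, using $\xi_i^\top = -\xi_i$.

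\emph{Step 3 (identify $H$).} The vector field $V_i(y) = \xi_i y$ extends $v_i = \xi_i x$, and $\nabla_{V_i}V_i = \xi_i^2 x$. Thus $H(x) = P_x\sum_i \xi_i^2 x$. It remains to show the $Q$-component of the gradient vanishes. Since $\vol\mc O_y$ is constant along orbits, its gradient lies in $(T_x\mc O_x)^\perp$. Hence $\nabla\log\vol\mc O_x = -P_x\sum_i\xi_i^2x = -H(x)$, which is the claimed identity.
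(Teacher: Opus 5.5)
The gap is in Step 2, which rests on the claim that $\log\vol\mc O_y = \tfrac12\log\det\Gamma(y) + \mathrm{const}$ on a whole neighborhood of $x$, with $\Gamma$ built from a \emph{fixed} complement $\mathfrak m$ of $\mathfrak g_x$. This claim is false whenever the principal isotropy algebra is nonzero. The coarea/equivariance argument you start from actually gives
\[
\vol\mc O_y = \frac{\vol G}{\vol G_y}\,\sqrt{\det\Gamma^{\perp}(y)},
\]
where $\Gamma^{\perp}(y)$ is the Gram matrix of the vectors $\xi y$, with $\xi$ running over an orthonormal basis of $\mathfrak g_y^{\perp}$ (Frobenius metric on $G$). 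Since $\mathfrak g_{g\cdot y} = \mathrm{Ad}_g\,\mathfrak g_y$ rotates as $y$ moves along its orbit, a fixed $\mathfrak m$ is the wrong complement off the normal slice.

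Concretely, take $G = SO(3)$ acting on $\R^3$, $x = e_3$ and $\xi_k y = e_k\times y$, so that $\mathfrak g_x = \mathrm{span}(\xi_3)$. With $\mathfrak m = \mathrm{span}(\xi_1,\xi_2)$ you get $\sqrt{\det\Gamma(y)} = |y|\,|y_3|$, which is not a multiple of $\vol\mc O_y = 4\pi|y|^2$. With $\mathfrak m = \mathrm{span}(\xi_1 + a\xi_3,\xi_2)$, which is still complementary to $\mathfrak g_x$ and still has $\xi_i e_3$ orthonormal, you get $|y|\,|y_3 - a y_1|$. Step 2 then outputs $\nabla\log\vol\mc O_{e_3} = -\sum_i\xi_i^2e_3 = 2e_3 - ae_1$, which has a nonzero tangential component. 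So Step 2 as written contradicts the invariance you invoke in Step 3. Your final projection by $P_x$ does not follow from what precedes it, even though the answer $-P_x\sum_i\xi_i^2x$ happens to be right.

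The repair uses the slice, which you only gesture at in Step 1. For $x$ principal and $v\in(T_x\mc O_x)^\perp$ small, $G_{x+v} = G_x$; the paper uses exactly this fact. Hence $\xi(x+v) = (\pi\xi)(x+v)$, with $\pi$ the orthogonal projection onto $\mathfrak g_x^{\perp}$, and so $\vol\mc O_{x+v} = C\sqrt{\det\Gamma(x+v)}$ with $C$ constant along the slice $\Sigma$. Through the tube $G\cdot\Sigma\cong G/G_x\times\Sigma$ this gives smoothness on $\R^d_{\pr}$. By invariance, $\nabla\log\vol\mc O_x$ is then normal, so only $\partial_v$ with $v$ normal matters, and your computation yields $\nabla\log\vol\mc O_x = -P_x\sum_i\xi_i^2x = -H(x)$.

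Your worry about exceptional orbits is well founded. At an exceptional $y$, the literal volume is smaller than the limit of the volumes of nearby principal orbits $\mc O_{y'}$ by the finite index $[G_y:G_{y'}]$. For example, let $O(2)$ act on $\R^2\oplus\R$ by $(u,t)\mapsto(gu,\det(g)\,t)$: the two circles of a principal orbit merge into one at $t=0$. So smoothness can only be proved on $\R^d_{\pr}$. The paper likewise proves smoothness only there, and obtains the identity on $\R^d_{\reg}$ by density and smoothness of $H$, i.e. for the smooth extension of $\nabla\log\vol$.

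Once repaired, your route is genuinely different from the paper's. The paper identifies $\nabla\log\vol$ with $-H$ weakly, using the Riemannian submersion structure on the principal part, the coarea formula, the divergence identity for horizontal equivariant fields, and Haar averaging. You instead do a direct pointwise first-variation computation with the Killing fields $y\mapsto\xi_iy$, which needs no quotient geometry or integration by parts.
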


We give a complete proof
of this result
in Appendix~\ref{appendix:prelim_proofs}.

\section{Main result}
\label{sec:main_results}

\subsection{Main result and examples}
\label{subsec:main_result}
Recall that we assume $f \colon \R^d
 \to \R$ is a smooth
 function and $G$ is a Lie group
 such that $f$ and $G$
 satisfy Assumption~\ref{assum:isometric}.
 We write the orthogonal projection
 onto $(T_x \mc O_x)^{\perp}$ as $P_x$ and its complement
 as $Q_x := I - P_x$.
 We consider the SDE
\begin{equation}\label{eqn:projected_SDE_full}
dX_t = -\nabla f(X_t)dt + \sqrt{2}(\alpha(X_t)P_{X_t}
+ \beta(X_t)Q_{X_t})dB_t,
\end{equation}
where $\alpha \colon \R^d \to \R_{\geqslant 0}$ and $\beta \colon \R^d \to \R_{\geqslant 0}$
are non-negative functions which
control the strength of
the noise in the horizontal
versus vertical directions.
Our main result applies to the solution of the SDE
\begin{equation}\label{eqn:full_isotropic_Langevin}
dY_t = -(\nabla f(Y_t) + (\alpha(Y_t)^2
-\beta(Y_t)^2)\nabla \log \vol \mc O_{Y_t})dt + \sqrt{2}\alpha(Y_t)dB_t.
\end{equation}
\begin{theorem}[Main result]\label{thm:main}
Suppose Assumption~\ref{assum:isometric} holds.
Suppose $f, \alpha, \beta$
are smooth and $G$-invariant,
$\nabla f$ and
$\alpha$ are
globally Lipschitz, $\alpha$ is positive everywhere,
and the difference $\alpha - \beta$ 
is compactly supported
within the set of regular orbits $\R^d_{\reg}$.
Suppose further
that the SDEs~\eqref{eqn:projected_SDE_full}
and~\eqref{eqn:full_isotropic_Langevin}
are initialized at a $G$-invariant
distribution with finite second
moment.
Then both~\eqref{eqn:projected_SDE_full}
and~\eqref{eqn:full_isotropic_Langevin}
have unique strong solutions
$(X_t)_t, (Y_t)_t$,
and $\mathcal{L}(X_t) = \mathcal{L}(Y_t)$ 
for all $t \geqslant 0$.
\end{theorem}

 We emphasize several aspects of the above result. First and most importantly,
the equivalent SDE has an additional drift term proportional
to the {\it negative} gradient of the log volume of $\mc O_x$.
This volume is measured with respect to the geometry 
of $\mc O_x$ as an embedded manifold of $\R^d$,
and is a novel type of implicit regularization for the SDE~\eqref{eqn:projected_Langevin_continuous}, biasing
the SDE towards points with a small orbit. As noted above, the $\nabla \log \vol \mc O_x$
term is fundamentally geometric: it is exactly the (negative) mean curvature
of the orbits $\mc O_x$. Second, although the SDE~\eqref{eqn:projected_SDE_full}
has non-isotropic noise, the equivalent SDE~\eqref{eqn:full_isotropic_Langevin} has isotropic
diffusion.
Third, we assume that the initial distribution is invariant under $G$.
This is necessary for our results, but is mild, as it is satisfied
by initializing
at a normal distribution with isotropic covariance. 
Fourth, we assume that $\alpha - \beta$ is compactly
supported within the set of regular orbits $\R^d_{\reg}$.
This assumption is necessary to apply standard strong
well-posedness results for SDEs, as it allows us
to avoid the non-smoothness of $P, Q$ and $\nabla \log \vol \mc O_x$ at the singular
orbits.

In the case where $\alpha$ is larger than $\beta$,
Theorem~\ref{thm:main} reveals that the SDE~\eqref{eqn:projected_SDE_full}
is biased towards points with a small orbit.
This identifies an intriguing new type
of implicit regularization based on the volume
$\vol \mc O_x$ rather than a more standard norm
(see the examples below).
The implicit regularization in our
model~\eqref{eqn:projected_SDE_full}
is thus tightly
connected to the symmetries of the over-parameterization,
and therefore the model architecture,
in the sense that different architectures
will lead to different symmetry groups with different
$\nabla \log \vol \mc O_x$ terms.
Theorem~\ref{thm:main} therefore suggests the intriguing possibility that when
one is selecting a particular model architecture one is also,
implicitly,
selecting a particular regularizer.

The intuition behind Theorem~\ref{thm:main}
is based on the observation
that, because~\eqref{eqn:projected_SDE_full}
is initialized at a $G$-invariant
distribution,
it will remain $G$-invariant
at all times.
Hence for any process
$g_t \in G$ such that 
$g_t \cdot X_t$ remains
$G$-invariant, the
marginal law of $g_t \cdot X_t$
must be that of $X_t$.
By choosing $g_t$ appropriately, we can thus
introduce additional noise in the $Q$ directions
without changing the marginal law.
The $\nabla\log \vol \mc O_x$ term
is the mean curvature of $\mc O_x$
at $x$ (Proposition~\ref{prop:log_volume_mean_curvature}),
and arises from the constraint
that $g_t$ remain in $G$.
This proof strategy
is developed in detail in Section~\ref{sec:proof_overview}.

Let us now consider several illustrative examples.
\begin{example}[Radial symmetries]
The simplest setting is when $G = SO(d)$
acts on $\R^d$ via $U \cdot x = Ux$.
The orbits are the spheres $\mc O_x = \S^{d -1}(\|x\|)$,
 $\R^d_{\reg} = \R^d\setminus \{0\}$,
and the volume is $\vol \mc O_x = c_d\|x\|^{d - 1}$
for a dimension-dependent constant $c_d$.
Functions $f$ which are symmetric under this
action are radial.
\end{example}
\begin{example}[Projection onto eigenvalues]
\label{ex:eigenvalues}
Consider the conjugation by $O(d)$ over symmetric matrices identified with $\mathbb{R}^{d(d+1)/2}$: for all $O \in O(d)$ and $M \in \text{Sym}(d)$, $O \cdot M = O^\top M O$. Two matrices are in the same orbit if and only if they have the same eigenvalues.
The set of regular orbits $\R^d_{\reg}$
is then exactly the matrices with distinct
eigenvalues.
 In Appendix~\ref{sec:examples},
 we show that for $M$ with eigenvalues $(\lambda_1, \cdots, \lambda_d)$, 
$
\vol \mc O_M = c_d \prod_{i<j} |\lambda_i - \lambda_j| \, ,
$
for a dimension-dependent constant $c_d$.
The class of functions which are symmetric under this
action are the spectral functions.
\end{example}

\begin{example}[Bures--Wasserstein case]
\label{ex:bures--wasserstein}
Consider the right multiplication by $O(d)$ over 
$d\times d$ real matrices identified with $\mathbb{R}^{d^2}$: for all $O \in O(d)$ and $M \in \mathbb{R}^{d\times d}$, $O \cdot M = M O$. Two matrices $M_1, M_2$ are in the same orbit if and only if $M_1^\top M_1 = M_2^\top M_2$.
It is straightforward to verify that the
regular orbits are rank $d$ matrices,
the exceptional orbits are rank $d - 1$
matrices, and singular orbits
are matrices with rank strictly less than $d -1$.
 Furthermore, we show in Appendix~\ref{sec:examples} that, denoting $(\sigma_1, \ldots, \sigma_d)$ the singular values of $M$, the volume of the orbit at $M$ is given by $\vol \mc O_M = c_d \prod_{i<j} \sqrt{\sigma_i^2 + \sigma_j^2}$ with $c_d$ a dimension-dependent constant; related formulas appeared in~\cite{carne1990geometry,huang-cp,yu2023riemannian}.
 In particular, the orbit volume is singular precisely
 at the singular orbits.

 This group action is closely related to the
 Bures--Wasserstein manifold on
 positive-definite matrices, as well as the 
 study of over-parameterization in linear
 neural networks. Indeed,
 the quotient space
$\R^{d^2}/O(d)$ is exactly the Bures--Wasserstein
manifold, which is known
to have a singular boundary~\cite{massart2020quotient}.
Functions $f$ which are symmetric under this
action are those of the form $f(X) = g(XX^\top)$.
Such functions
have been studied as a model of over-parameterization
in neural networks since the influential
work~\cite{gunasekar2017implicit},
see for example~\cite{li2018algorithmic,arora2019implicit}.

Finally, we remark here that the
parameterization $XX^\top$ can be generalized to
the deep linear network~\cite{arora2018optimization},
where one instead considers
a product of $N$ real matrices 
$X_NX_{N-1}\cdots X_1$.
This model has $\GL_d(\R)$
symmetries, but one may nonetheless
apply our model to this case by considering
only the subgroup of $O(d)$ symmetries, see also~\cite{menon2025entropy}.
\end{example}

\subsection{Identity diffusion and inverse log volume
factors}
\label{subsec:applications}

To better illustrate Theorem~\ref{thm:main},
it is useful to consider the case where $\alpha \equiv 1$
and
$\beta$ is a function of $\log \vol\mc O_y$, since in this
case the stationary
distribution of the SDE is explicit.
\begin{corollary}[Identity diffusion]\label{cor:identity_diffusion}
Suppose Asssumption~\ref{assum:isometric} holds,
$f$ is smooth and $\nabla f$ is globally Lipschitz.
Consider the SDE
\begin{equation}\label{eqn:projected_SDE_identity}
dX_t = -\nabla f(X_t)dt + \sqrt{2}(P_{X_t} + \beta(X_t)Q_{X_t})dB_t,
\end{equation} where $\beta(y) = \phi(\log \vol \mc O_y)$
for a smooth function
$\phi \colon \R \to \R_{\geqslant 0}$.
Suppose that $\phi(\R) \subset [c, C]$
for $0 < c < C$ and
$\phi$ is $1$ on 
a ray $(-\infty, \tau_0]$.
Suppose further that the~\eqref{eqn:projected_SDE_identity}
is initialized at a $G$-invariant distribution
with finite second moment.
Then there is a unique strong solution to~\eqref{eqn:projected_SDE_identity},
and it has stationary distribution
$$
\rho \propto e^{-\int_{0}^{\log \vol \mc O_y} (1- \phi(s)^2) d s} e^{-f(y)}.
$$
\end{corollary}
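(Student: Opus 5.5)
We will deduce the corollary from Theorem~\ref{thm:main} with $\alpha \equiv 1$ and $\beta(y) = \phi(\log \vol \mc O_y)$. The plan has three steps: verify the hypotheses of Theorem~\ref{thm:main}, identify the equivalent SDE~\eqref{eqn:full_isotropic_Langevin} as a gradient-type Langevin dynamics with explicit Gibbs density, and then transfer stationarity from $Y$ back to $X$.

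\textbf{Step 1: hypotheses of Theorem~\ref{thm:main}.} Smoothness, positivity, global Lipschitzness and $G$-invariance of $\alpha\equiv 1$ are immediate. $G$-invariance of $\beta$ follows because $\mc O_{g\cdot y} = \mc O_y$. The key point is that $\alpha - \beta = 1 - \phi(\log \vol\mc O_y)$ is compactly supported in $\R^d_{\reg}$. Since $\phi \equiv 1$ on $(-\infty,\tau_0]$, this difference vanishes wherever $\vol \mc O_y \le e^{\tau_0}$, and in particular near $\R^d_{\sing}$, where $\vol\mc O_y = 0$ by convention. Near exceptional orbits the volume is positive, and smoothness there comes from Proposition~\ref{prop:log_volume_mean_curvature}. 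Near singular orbits we need $\vol \mc O_y \to 0$, so that $\beta$ is identically $1$ (hence smooth) on a neighborhood. We will justify this by continuity of the orbit volume, using that orbits of lower dimension have zero $m$-dimensional volume.

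Compactness of the support requires $\{\vol \mc O_y \ge e^{\tau_0}\}$ to be bounded, which we do not have for free. Our first attempt is to observe that, since $G \subset O(d)$, the action is linear, so $\vol \mc O_{ry} = r^m \vol\mc O_y$; hence the set $\{\vol\mc O_y > e^{\tau_0}\}$ is unbounded. Compact support therefore fails as literally stated. We would handle this either by approximating $\phi$ with functions that also equal $1$ for large arguments, or by checking that the proof of Theorem~\ref{thm:main} only needs $\beta$ smooth and bounded with bounded derivatives away from $\R^d_{\sing}$. The latter is plausible since $\phi$ takes values in $[c,C]$ and the volume grows only polynomially. \emph{This is the main obstacle}, and we would first try the second route: localize the argument of Theorem~\ref{thm:main}.

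\textbf{Step 2: the Gibbs density for $Y$.} With $\alpha\equiv 1$, the SDE~\eqref{eqn:full_isotropic_Langevin} becomes
\[
dY_t = -\nabla\Big(f(Y_t) + \Psi(\log \vol\mc O_{Y_t})\Big)dt + \sqrt2\, dB_t, \qquad \Psi(u) := \int_0^u (1-\phi(s)^2)\,ds,
\]
by the chain rule, since $(1-\beta^2)\nabla\log\vol\mc O = \nabla \Psi(\log\vol\mc O)$. This holds on $\R^d_{\reg}$. Near $\R^d_{\sing}$ the coefficient $1-\phi^2$ vanishes, so the identity holds everywhere: there $\Psi(\log\vol\mc O)$ is locally the constant $\Psi(-\infty)$, which is finite because the integrand vanishes below $\tau_0$. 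Thus $Y$ is overdamped Langevin dynamics with potential $f + \Psi(\log \vol \mc O)$, and $\rho$ is stationary for it. This is checked via the Fokker--Planck equation: $\nabla\cdot(\rho\nabla V) + \Delta\rho = 0$ for $\rho \propto e^{-V}$. Normalizability of $\rho$ should be assumed or derived from integrability of $e^{-f}$, using that $\Psi(u) \ge -C' - (C^2-1)^+u$ controls the polynomial growth of the volume factor.

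\textbf{Step 3: transfer to $X$.} Since $\rho$ is $G$-invariant, it is an admissible initialization. Initializing $X_0 \sim \rho$ (after checking finite second moment), Theorem~\ref{thm:main} gives $\mathcal L(X_t) = \mathcal L(Y_t) = \rho$ for all $t$. Hence $\rho$ is stationary for~\eqref{eqn:projected_SDE_identity}, and strong well-posedness is inherited directly from the theorem.
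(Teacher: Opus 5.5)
\paragraph{Gap: the proposal still relies on Theorem~\ref{thm:main}, which does not apply here.}
In Step 1 you correctly notice that $1-\phi(\log\vol\mc O_y)$ is not compactly supported, so Theorem~\ref{thm:main} does not apply. The rest of the proposal nevertheless depends on that theorem. Well-posedness in Step 3 (``inherited directly from the theorem'') needs it, and so does the transfer $\mathcal L(X_t)=\mathcal L(Y_t)$. Neither of your suggested repairs works as stated.

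The level sets of the orbit volume are generally unbounded, and at infinity they approach $\R^d_{\sing}$. For example, take $G=SO(2)\times SO(2)$ acting on $\R^2\times\R^2$, where $\vol\mc O_{(u,v)}=4\pi^2|u||v|$.
\begin{itemize}
\item The set $\{e^{\tau_0}\le \vol\mc O\le e^{T}\}$ is unbounded. Truncating $\phi$ at large arguments therefore does not restore compact support.
\item On that set $|\nabla\log\vol\mc O|\ge 1/|u|$, which is unbounded. So $\nabla\beta=\phi'(\log\vol\mc O)\nabla\log\vol\mc O$, $\nabla Q$, and the $Y$-drift $(1-\beta^2)\nabla\log\vol\mc O$ are neither bounded nor globally Lipschitz.
\item The coupling coefficient $(L_{g,x}^\top L_{g,x})^{\dagger/2}$ also blows up there.
\end{itemize}
Consequently the Lipschitz-based well-posedness for $Y$, $Z$ and $g_t$, and the Fokker--Planck uniqueness step in the proof of Theorem~\ref{thm:main}, do not ``localize'' under bounded-derivative assumptions. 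Those assumptions fail here.

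\paragraph{The missing idea: check stationarity directly for $X$.}
Stationarity is a static identity, so you do not need the time-dependent equivalence with $Y$ at all. Check directly that $\rho\propto e^{-V}$, with your $V=f+\Psi(\log\vol\mc O)$, solves the stationary Fokker--Planck equation of~\eqref{eqn:projected_SDE_identity}.
\begin{itemize}
\item For $\psi\in C_c^\infty$, use $G$-invariance of $\rho$, $f$ and $\beta$, together with Proposition~\ref{prop:horizontal_proj}, to replace $\psi$ by its $G$-average $\bar\psi$.
\item Lemma~\ref{lem:hessian_G_invariant} gives $\tr(Q\nabla^2\bar\psi)=\langle\nabla\log\vol\mc O,\nabla\bar\psi\rangle$ on $\R^d_{\reg}$.
\item Hence the generator of $X$ acts on $\bar\psi$ as $\Delta\bar\psi-\langle\nabla f+(1-\beta^2)\nabla\log\vol\mc O,\nabla\bar\psi\rangle=\Delta\bar\psi-\langle\nabla V,\nabla\bar\psi\rangle$. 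This also holds trivially near $\R^d_{\sing}$, where $\beta\equiv1$.
\item This expression integrates to zero against $e^{-V}$.
\end{itemize}
The argument is pointwise and needs no support condition. Your Step 2 computation of $\Psi$ and of normalizability is exactly the integrating-factor part of it.

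\paragraph{Well-posedness and the final upgrade.}
Well-posedness of $X$ has to be obtained separately, as the paper does. The drift $\nabla f$ is globally Lipschitz, and $P+\beta Q$ is smooth and bounded (it equals $I$ near $\R^d_{\sing}$). This rules out explosion and gives a unique strong solution. To upgrade infinitesimal invariance of $\rho$ to genuine stationarity, one then uses non-explosion, uniform ellipticity ($P+\beta^2Q\succeq\min(1,c^2)I$) and standard results on stationary Fokker--Planck equations. The paper only sketches this last step as well.
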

In particular, if $\phi =\eps > 0$
on a ray $[\tau_1, \infty)$ with $\tau_1 < 0$, then for any $y$ such that
$\log \vol \mc O_y \geqslant \tau_1$,
$\rho(y) = \frac{1}{A}(\vol \mc O_y)^{\eps^2-1}e^{-f(y)}$,
where $A$ is a normalizing constant. The regularizing effect of the log volume
therefore manifests as an inverse factor
in the stationary distribution.

This Corollary imposes slightly different
regularity assumptions on $\beta$ than Theorem~\ref{thm:main},
so we explain the minor modifications
which must be made to
the proof of Theorem~\ref{thm:main}
in Appendix~\ref{appendix:proof_by_PDE}.

Let us finally remark that, ignoring regularity issues, Corollary~\ref{cor:identity_diffusion}
suggests that the fully projected SDE
\begin{equation}\label{eqn:ex_fully_projected}
dX_t = -\nabla f(X_t) dt + \sqrt{2}P_{X_t}dB_t,
\end{equation} has stationary distribution proportional
to $\frac{1}{\vol \mc O_x}e^{-f}$.
However,
to the best of our knowledge,
the well-posedness
of the dynamics~\eqref{eqn:ex_fully_projected}
falls outside known
results in SDE theory due to the
non-smoothness of the projection
$P$ at singular orbits, or equivalently
the singularities in the drift
of the equivalent dynamics.
The only result we are aware of 
along these lines uses an analysis
tailored to shape spaces in certain parameter regimes~\cite{le1994brownian}.
We therefore leave the rigorous study of~\eqref{eqn:ex_fully_projected}
for general groups $G$
as an interesting direction for future work.

\section{Proof overview: coupling the two SDEs}
\label{sec:proof_overview}

In this section we overview
our proof approach, which is based on
introducing a third stochastic
process $g_t$ on the group $G$ 
such that $g_t \cdot X_t$ is a solution to~\eqref{eqn:full_isotropic_Langevin}.
Full details for this proof are provided
in Appendix~\ref{appendix:SDE_proof}.

\subsection{$G$-invariance of the SDE and proof approach}
The first key idea is that, so long as the SDEs
are initialized at a $G$-invariant distribution, they
must remain
$G$-invariant for all times. This observation
is formalized in the following Lemma,
proved in Appendix~\ref{appendix:dynamics_remain_g_invariant}.
\begin{lemma}[SDE remains $G$-invariant]
\label{lem:remain_G_invariant}
Suppose $f, \alpha, \beta$
are as in Theorem~\ref{thm:main}
and Assumption~\ref{assum:isometric} holds.
If~\eqref{eqn:projected_SDE_full}
is initialized at a distribution which is $G$-invariant,
then the law of $X_t$ remains $G$-invariant
at all times.
Similarly, if~\eqref{eqn:full_isotropic_Langevin}
is initialized at a distribution which is $G$-invariant,
then the law of $Y_t$ remains $G$ invariant at all times.
\end{lemma}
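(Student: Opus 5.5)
The plan is to show that for each fixed $g\in G$ the process $g\cdot X_t$ solves the same SDE~\eqref{eqn:projected_SDE_full}, driven by the rotated Brownian motion $g B_t$, with initial condition $g\cdot X_0$. Then uniqueness in law transfers invariance of the initial law to all times. The argument for $Y_t$ is identical, so we focus on $X_t$ and indicate the extra facts needed for~\eqref{eqn:full_isotropic_Langevin}.

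\textbf{Step 1: equivariance of the coefficients.} We need three identities.
\begin{itemize}
\item Since $f(gx)=f(x)$ and $g$ is orthogonal, differentiating gives $g^\top\nabla f(gx)=\nabla f(x)$, that is, $\nabla f(gx)=g\nabla f(x)$.
\item The orbit satisfies $\mc O_{gx}=\mc O_x$, and $x\mapsto gx$ maps $\mc O_x$ to itself, so its differential $g$ sends $T_x\mc O_x$ onto $T_{gx}\mc O_{gx}$. Since $g$ is orthogonal, it also maps the orthogonal complements onto each other. Hence $P_{gx}=gP_xg^\top$ and $Q_{gx}=gQ_xg^\top$.
\item $G$-invariance of $\alpha,\beta$ gives $\alpha(gx)=\alpha(x)$ and $\beta(gx)=\beta(x)$.
\end{itemize}
For $Y$ we additionally need $\vol\mc O_{gx}=\vol\mc O_x$, which is immediate from $\mc O_{gx}=\mc O_x$. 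Differentiating then gives $\nabla\log\vol\mc O_{gx}=g\nabla\log\vol\mc O_x$ on $\R^d_{\reg}$. This set is $G$-invariant, and there $\alpha^2-\beta^2$ is supported; equivalently one can use Proposition~\ref{prop:log_volume_mean_curvature} and the equivariance of the mean curvature under isometries.

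\textbf{Step 2: the rotated process solves the same SDE.} Set $\tilde X_t=gX_t$ and $\tilde B_t=gB_t$. The process $\tilde B$ is a standard Brownian motion by rotation invariance. Step 1 gives
\[
d\tilde X_t=-g\nabla f(X_t)dt+\sqrt2\, g(\alpha P_{X_t}+\beta Q_{X_t})g^\top g\,dB_t=-\nabla f(\tilde X_t)dt+\sqrt2(\alpha(\tilde X_t)P_{\tilde X_t}+\beta(\tilde X_t)Q_{\tilde X_t})d\tilde B_t .
\]
Thus $\tilde X$ is a (weak) solution of~\eqref{eqn:projected_SDE_full} with initial law $\mathcal L(gX_0)=\mathcal L(X_0)$.

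\textbf{Step 3: uniqueness in law (the main obstacle).} The key point is that the diffusion coefficient is globally Lipschitz even though $P$ and $Q$ are singular at $\R^d_{\sing}$. Write
\[
\alpha P+\beta Q=\alpha I+(\beta-\alpha)Q .
\]
Here $\alpha$ is smooth and globally Lipschitz, and $\beta-\alpha$ is smooth with compact support $K\subset\R^d_{\reg}$. By Proposition~\ref{prop:smoothness_of_P}, $Q$ is smooth on a neighborhood of $K$, so $(\beta-\alpha)Q$ is smooth and compactly supported, hence Lipschitz. The drift $-\nabla f$ is Lipschitz by assumption. For $Y$, the term $(\alpha^2-\beta^2)\nabla\log\vol\mc O$ is smooth and compactly supported by Proposition~\ref{prop:log_volume_mean_curvature}, hence Lipschitz.

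With Lipschitz coefficients and a square-integrable initial condition, the standard It\^o theory gives pathwise uniqueness, and Yamada--Watanabe then yields uniqueness in law. Since $X$ and $gX$ solve the same SDE with the same initial law, $\mathcal L(gX_t)=\mathcal L(X_t)$ for all $t$ and all $g\in G$.
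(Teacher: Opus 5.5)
Your proposal is correct and follows essentially the same argument as the paper. You use the equivariance identities $\nabla f(gx)=g\nabla f(x)$, $P_{gx}=gP_xg^\top$ and the $G$-invariance of $\alpha,\beta$ to show that $g\cdot X_t$ weakly solves the same SDE driven by $gB_t$, then conclude by uniqueness in law for Lipschitz coefficients; your explicit Lipschitz check via $\alpha I+(\beta-\alpha)Q$ and your direct verification of equivariance of the volume-gradient drift for the $Y$ equation simply fill in details the paper delegates to ``standard SDE theory'' and an ``analogous'' argument.
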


The idea of the proof of Theorem~\ref{thm:main} is to use the $G$-invariance
of Lemma~\ref{lem:remain_G_invariant}
to design a process $g_t$ with the following properties:
\begin{enumerate}
    \item[P1.] the process $(g_t)_t$ remains on the Lie group $G$, and
    \item[P2.] the process $(g_t \cdot X_t)_t$ is a weak solution to
    the SDE~\eqref{eqn:full_isotropic_Langevin}.
\end{enumerate}
Given $g_t$ with the above properties, Lemma~\ref{lem:remain_G_invariant}
then implies that $(g_t \cdot X_t)_t$ has the same marginal
law as $X_t$,
and thus yields Theorem~\ref{thm:main}.
We essentially introduce
rotation to the particle $X_t$ via
a process $g_t$, which will not affect
the marginal law, but will still introduce
 noise in the $Q$ directions
 so as to make $g_t \cdot X_t$ match
 with $Y_t$ from~\eqref{eqn:full_isotropic_Langevin},
 see Figure~\ref{fig:proof_idea}.

 \begin{figure}
     \centering
     \includegraphics[width=.8\linewidth, trim=0cm 4cm 0cm 0cm, clip]{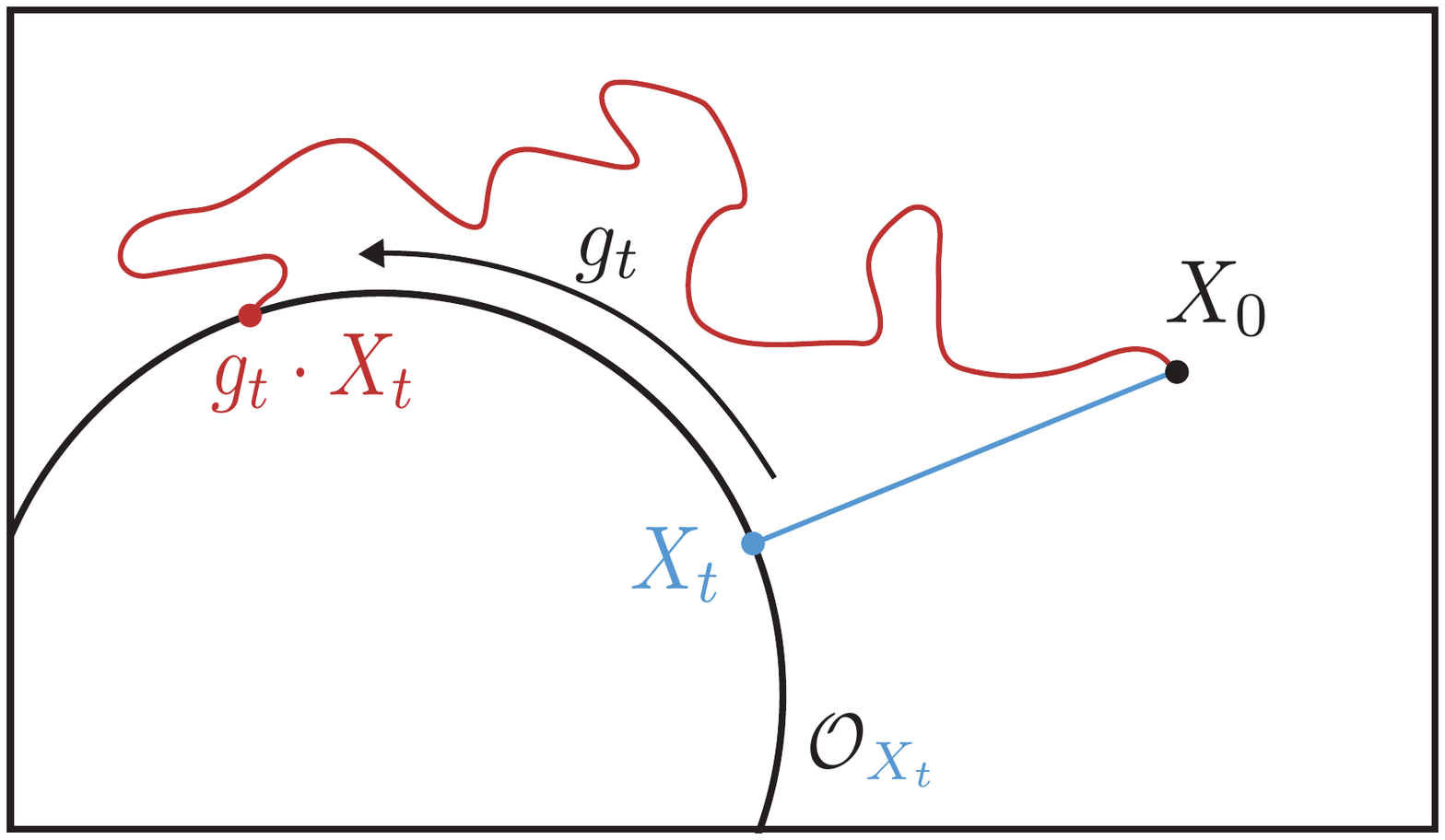}
     \caption{By introducing an appropriate process $g_t \in G$, we can create additional
     movement in the $T_x \mc O_x$ directions without changing the $G$-invariance property, and thus the marginal distributions.}
     \label{fig:proof_idea}
 \end{figure}

\subsection{Constructing Brownian motion on an orbit} To better explain our 
construction, we consider a simpler but related question:
given a {\it fixed} point $x$, how can we construct
uniform Brownian motion on the group orbit $\mc O_x$
of $x$ by a process of the form $g_t \cdot x$ for $g_t$ a stochastic
process in $G$?
 We thus solve a particular ``stochastic geometric control theory" problem.
Interestingly, it turns out that the answer
to this question is {\it not} to take $g_t$ as
uniform Brownian motion on the group $G$ itself (see Appendix~\ref{subsec:BM_on_group_counterexample} for a simple example).

To approach this question, it is useful
to recall the Itô form
of Brownian motion on a manifold embedded in
$\R^d$~\cite{lewis1986brownian,stroock2000introduction,inauen2023}.
\begin{theorem}[Brownian motion on an embedded manifold]
\label{thm:embedded_manifold_BM}
Suppose $M \subset \R^{l}$ is complete and connected
smooth manifold without boundary embedded in $\R^l$. For all $p \in M$, let $H^M(p) \in N_pM$
be the mean curvature vector at $p$,
and let $Q^M_{p}$ be the projection from $T_p\R^d$
to $T_pM$.
Then the SDE
\begin{equation}\label{eqn:embedded_manifold_BM}
dZ_t = H^M(Z_t)dt + \sqrt{2}Q^M_{Z_t}dB_t, \quad \quad Z_0 = p_0
\end{equation} has a unique strong solution
and is a Brownian motion on $M$ initialized at $p_0 \in M$.
\end{theorem}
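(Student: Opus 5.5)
We need to prove Theorem~\ref{thm:embedded_manifold_BM}: the SDE~\eqref{eqn:embedded_manifold_BM} has a unique strong solution, it stays on $M$, and it is Brownian motion there, i.e.\ its generator restricted to $M$ is the Laplace--Beltrami operator $\Delta_M$. (The normalization $\sqrt{2}$ means the generator is $\Delta_M$ rather than $\tfrac12\Delta_M$, consistent with the rest of the paper.)

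\textbf{Step 1: extend the coefficients off $M$.} Since $M$ is a smooth embedded submanifold, it has a tubular neighborhood on which $p\mapsto Q^M_{\pi(p)}$ and $p\mapsto H^M(\pi(p))$ are smooth, where $\pi$ is the nearest-point projection. Multiply by a cutoff equal to $1$ near $M$ to obtain smooth coefficients on all of $\R^l$. If $M$ is noncompact these need not be globally Lipschitz, so I would also localize on balls. The standard theory then gives a unique strong solution up to the exit time of each ball, and these pathwise-unique local solutions patch together.

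\textbf{Step 2: the solution stays on $M$.} Let $\rho(p)=\tfrac12\operatorname{dist}(p,M)^2$, which is smooth near $M$, with $\nabla\rho(p)=p-\pi(p)$ and $\nabla^2\rho|_M = P^M$. Apply Itô's formula to $\rho(Z_t)$. The martingale term is $\sqrt2\,\langle \nabla\rho, Q^M_{\pi(Z)}dB\rangle$, which vanishes because $p-\pi(p)$ is normal at $\pi(p)$ and $Q^M_{\pi(p)}$ maps into $T_{\pi(p)}M$. The remaining drift terms are $O(\rho)$ in a neighborhood of $M$: at points of $M$ the Hessian term is $\operatorname{tr}(Q P Q)=0$, and $\langle\nabla\rho, H^M\rangle=0$. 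Grönwall's inequality up to the exit time of the tubular neighborhood then gives $\rho(Z_t)=0$ for all $t$.

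\textbf{Step 3: identify the generator on $M$.} For $\phi$ smooth on $\R^l$, Itô's formula gives the generator
\[
L\phi = \langle H^M,\nabla\phi\rangle + \operatorname{tr}(Q^M\nabla^2\phi\, Q^M).
\]
Take an orthonormal frame $v_i$ of $T_pM$ and a geodesic $\gamma_i$ of $M$ with $\gamma_i(0)=p$, $\gamma_i'(0)=v_i$. Then $\ddot\gamma_i(0)=h_p(v_i,v_i)$, so
\[
\tfrac{d^2}{ds^2}\phi(\gamma_i(s))\big|_{s=0} = \nabla^2\phi(v_i,v_i) + \langle\nabla\phi, h(v_i,v_i)\rangle .
\]
Summing over $i$ gives $\Delta_M(\phi|_M) = \operatorname{tr}(Q^M\nabla^2\phi\, Q^M) + \langle H^M,\nabla\phi\rangle = L\phi$. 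Hence $Z$ solves the martingale problem for $\Delta_M$ on $M$.

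\textbf{Step 4: non-explosion, the main obstacle.} Completeness of $M$ does not alone prevent Brownian motion from exploding when curvature is unbounded below, so the statement as written implicitly needs stochastic completeness. I would either assume $M$ compact, which covers the orbits used in the paper since $G$ is compact, or assume a Ricci lower bound and use a Lyapunov function such as $1+d_M(p_0,\cdot)^2$ together with Laplacian comparison. Given non-explosion, uniqueness of the martingale problem for $\Delta_M$ on a complete, stochastically complete manifold identifies the law of $Z$ as Brownian motion on $M$.
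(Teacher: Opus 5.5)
\textbf{There is a genuine gap in your Step 2.} It sits exactly where the mean curvature drift has to do its work. Your only justification for ``the drift of $\rho(Z_t)$ is $O(\rho)$'' is that this drift vanishes at points of $M$. A smooth function vanishing on $M$ is in general only $O(\mathrm{dist}(\cdot,M)) = O(\sqrt{\rho})$. The inequality $\dot\rho \le C\sqrt{\rho}$ with $\rho(0)=0$ does not force $\rho\equiv 0$; it allows $\rho = C^2t^2/4$.

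In fact your reasoning uses only that $H^M$ is normal, so it proves too much. Verbatim, it would show that the driftless equation $dZ_t = \sqrt{2}\,Q^M_{\pi(Z_t)}dB_t$ stays on $M$, since there too the drift of $\rho$ is $\mathrm{tr}(Q\nabla^2\rho\,Q)$, which vanishes on $M$. That conclusion is false. On the sphere of radius $R$ in $\R^l$ one has $\langle Z_t, Q^M_{\pi(Z_t)}dB_t\rangle = 0$, hence $|Z_t|^2 = R^2 + 2(l-1)t$.

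\textbf{What is missing} is the first-order behaviour of the It\^o correction off $M$. Write $Z = p+\nu$ with $p=\pi(Z)$ and $\nu \in N_pM$. Let $A_\nu$ be the shape operator, $\langle A_\nu u, w\rangle = \langle h_p(u,w), \nu\rangle$. Then $\nabla^2\rho(Z) = I - D\pi(Z)$, where $D\pi(Z)$ kills $N_pM$ and equals $(I-A_\nu)^{-1}$ on $T_pM$. To see this, differentiate $\pi(c(s)+\nu(s)) = c(s)$ for a curve $c$ in $M$ and a normal-parallel field $\nu(s)$, using the Weingarten equation. Hence
\[
\mathrm{tr}\big(Q^M_p\nabla^2\rho(Z)\,Q^M_p\big) = -\mathrm{tr}A_\nu - \mathrm{tr}A_\nu^2 + O(|\nu|^3) = -\langle H^M(p),\nu\rangle - \|A_\nu\|_F^2 + O(|\nu|^3).
\]
The first term cancels exactly against $\langle \nabla\rho(Z), H^M(p)\rangle = \langle \nu, H^M(p)\rangle$. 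What remains is $-\|A_\nu\|_F^2 + O(|\nu|^3) = O(\rho)$, and with this your Gr\"onwall argument closes.

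A slicker repair is to prove Step 3 first and apply it to the vector-valued function $\psi(Z) := Z - \pi(Z)$.
\begin{itemize}
\item Since $\psi|_M \equiv 0$, the generator identity gives $L\psi = \Delta_M(\psi|_M) = 0$ on $M$, so $L\psi = O(|\nu|)$.
\item Also $D\psi\,Q^M_{\pi(Z)} = (I - D\pi(Z))Q^M_{\pi(Z)}$ vanishes on $M$.
\item Hence $L\rho = \langle \psi, L\psi\rangle + \|D\psi\,Q^M_{\pi(Z)}\|_F^2 = O(\rho)$.
\end{itemize}
Either way, this is the place where the specific drift $H^M$ is actually used.

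\textbf{The rest of your outline is sound.} The paper itself does not prove this theorem: it quotes it from the literature and applies it only to the compact orbits $\mc O_x$. So restricting to compact $M$ is harmless. Your observation that for noncompact $M$ the statement as written needs stochastic completeness is correct. Similarly, the global cutoff in Step 1 tacitly needs $M$ to be closed in $\R^l$, which completeness of the induced metric does not by itself guarantee.
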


The idea is then to try and find a process $g_t$ in $G$
such that $g_t \cdot x$ solves~\eqref{eqn:embedded_manifold_BM}
for $M = \mc O_x$.
Recall that we regard $g_t \in G$ as an
element of $\R^{d\times d}$. Suppose $g_t$ is following
a diffusion with (to be specified) drift $V$ and diffusion matrix $J$:
\begin{equation}\label{eqn:candidate_g_diffusion}
dg_t = V(g_t)dt + \sqrt{2} J(g_t) dB_t' \quad \quad g_0 = I,
\end{equation} where $B_t'$ is an independent Brownian motion on the space $\R^{d \times d}$.

For ease of notation, let $F(g, x) := g \cdot x$,
and let $L_{g, x} \colon T_g\R^{d \times d}
\to T_{g \cdot x} \mc O_x$ be
the linear map
\begin{equation}\label{eqn:Lgx_defn}
L_{g, x}A := dF_{g, x}[Q_{g}^GA, 0],
\end{equation}
where $Q_g^G$ is the projection
onto the tangent space of $G$ at $g$.
Assuming that $\mathrm{im}(J(g)) \subset T_gG$,
Itô's lemma then implies that
 $Y_t := F(g_t, x) = g_t \cdot x$ follows
\begin{equation}\label{eqn:image_g_diffusion}
dY_t = L_{g_t, x}V(g_t)dt + \sqrt{2}L_{g_t, x}J(g_t)dB_t'.
\end{equation}
Using the fact that $H(x) = \nabla \log \vol \mc O_x$
from Proposition~\ref{prop:log_volume_mean_curvature},
combined with Theorem~\ref{thm:BM_on_orbit},
we know that
Brownian motion on $\mc O_x$ can be
constructed with the following SDE:
\begin{equation}\label{eqn:BM_on_orbit}
dZ_t = -\nabla \log \vol \mc O_x dt + \sqrt{2}Q_{Z_t} dB_t'',
\end{equation} where $H(Z_t)$ is the mean curvature
of the orbit $\mc O_x$.
We thus try and match terms between~\eqref{eqn:image_g_diffusion}
and~\eqref{eqn:BM_on_orbit}.

Beginning with the diffusion terms, this suggests we take
$J$ such that
$$
L_{g_t, x}J(g_t)J(g_t)^\top L_{g_t, x}^\top = Q_{g_t \cdot x}.
$$ 
This can be ensured by taking
$$
J_0(g_t) := (L_{g_t, x}^\top L_{g_t,x})^{\dagger/2},
$$ where we use $(\cdot)^{\dagger}$ to denote the 
pseudo-inverse. 

Although the range of $J_0$ is always
in $T_gG$, this doesn't yet guarantee
that the process will remain in $G$ due
to the Itô correction.
This is similar to the standard construction of Brownian
motion on embedded manifolds encapsulated
in Theorem~\ref{thm:embedded_manifold_BM}: even if the drift
and noise are tangential to the manifold, the Itô terms mean
that the particle could still ``fall off" the manifold,
and therefore
one must add certain 
terms involving the second fundamental form of $G$ to stay on the manifold.
Let $h^G_g$ denote the second fundamental form of $G$ (as an embedded
manifold of $\R^{d \times d}$) at $g \in G$. Then put
$$
V_0(g) := \tr(h^G_{g}( L_{g, x}^{\dagger}\cdot, L_{g, x}^{\dagger}\cdot)),
$$ where we take the trace over an orthonormal basis of $T_{g \cdot x}\mc O_x$; note that $V_0(g) \in \R^{d\times d}$
is a matrix not a scalar.
With this definition, it can be shown that~\eqref{eqn:candidate_g_diffusion},
with $V = V_0$ and $J = J_0$ as above, remains on $G$. 

The following observation, proved in Appendix~\ref{subsec:second_fund_forms_eq}, shows that
this drift term is closely related to the second fundamental
form, and thus the mean curvature, of the orbit $\mc O_x$.
\begin{lemma}[Relationship between second fundamental forms]
\label{lem:second_fund_forms_eq}
Let $g \in G$ and $x \in \R^d$.
Let $h_{g \cdot x}^{\mc O_x}$ be the second
fundamental form of $\mc O_x$ at $g \cdot x$
and $h^G_g$ be the second fundamental form
of $G$ at $g$.
Then for all $A \in T_{g} G$ we have
$$
h_{g \cdot x}^{\mc O_x} (L_{g, x}A,L_{g,x}A) = 
P_{g \cdot x}(h^G_g(A, A) x).
$$
\end{lemma}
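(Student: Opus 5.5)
The plan is to realize both second fundamental forms as normal components of second derivatives of curves, and then compare them through the linear map $F_x\colon M\mapsto Mx$ on $\R^{d\times d}$. Fix $g\in G$, $x\in\R^d$ and $A\in T_gG$. Choose a smooth curve $\gamma\colon(-\epsilon,\epsilon)\to G$ with $\gamma(0)=g$ and $\gamma'(0)=A$; to be concrete we can take $\gamma(s)=g\exp(sg^{-1}A)$, since $g^{-1}A$ lies in the Lie algebra of $G$. For an embedded submanifold $N\subset\R^k$ and a curve $c$ in $N$, the second fundamental form satisfies the standard identity $h^N_{c(0)}(c'(0),c'(0))=P^N_{c(0)}(c''(0))$. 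This follows from the definition $h^N(v,w)=P^N(\nabla_V W)$ by extending $c'$ to a vector field $W$ tangent to $N$ and observing that $\nabla_{c'}W$ evaluated along $c$ equals $c''$. Applying this to $G$ gives $h^G_g(A,A)=P^G_g(\gamma''(0))$, where $P^G_g=I-Q^G_g$.

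Next, push the curve forward to the orbit. Set $c(s):=\gamma(s)\cdot x=\gamma(s)x$, which is a curve in $\mc O_x$. Because $F(\cdot,x)$ is linear in $g$, we have $c'(0)=Ax$ and $c''(0)=\gamma''(0)x$. Since $A\in T_gG$, we also have $Q^G_gA=A$, so $L_{g,x}A=dF_{g,x}[A,0]=Ax=c'(0)$. Applying the curve identity to $\mc O_x$ at $g\cdot x$ then gives
\[
h^{\mc O_x}_{g\cdot x}(L_{g,x}A,L_{g,x}A)=P_{g\cdot x}(\gamma''(0)x).
\]
Here we use that $P_{g\cdot x}$ is the projection onto $(T_{g\cdot x}\mc O_x)^\perp$ and that $\mc O_{g\cdot x}=\mc O_x$.

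It remains to replace $\gamma''(0)$ by its normal part $P^G_g\gamma''(0)=h^G_g(A,A)$. Write $\gamma''(0)=Q^G_g\gamma''(0)+h^G_g(A,A)$. The tangential part $B:=Q^G_g\gamma''(0)$ lies in $T_gG$. Differentiating $s\mapsto\eta(s)x$ at $s=0$ for a curve $\eta$ in $G$ through $g$ with $\eta'(0)=B$ shows that $Bx\in T_{g\cdot x}\mc O_x$, so $P_{g\cdot x}(Bx)=0$. Therefore $P_{g\cdot x}(\gamma''(0)x)=P_{g\cdot x}(h^G_g(A,A)x)$, which is the claim.

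The only delicate step is justifying the curve identity $h^N(c',c')=P^N(c'')$ on each manifold. This needs $\mc O_x$ to be an embedded submanifold, which holds because $G$ is compact and orbits of compact group actions are closed embedded submanifolds. It also needs a consistent convention that the connection $\nabla$ is the flat Euclidean one on $\R^d$ and on $\R^{d\times d}$. Once that convention is fixed, the argument is purely linear algebra.
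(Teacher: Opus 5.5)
Your proof is correct, but it reaches the identity by a different route than the paper. The paper first works at the identity. With the field $W_X = XA$ it computes $h^G_I(A,A)=A^2$ explicitly, and checks that $A^2$ is symmetric and hence already normal to $T_IG$, which consists of skew matrices. It then computes $h^{\mathcal{O}_x}_x(Ax,Ax)=P_x(A^2x)$ using the linear field $x'\mapsto Ax'$. Finally it transports both formulas to $g$ by left translation, the isometry $g$, and the equivariance $gP_x=P_{g\cdot x}g$ of Proposition~\ref{prop:horizontal_proj}, obtaining the closed form $h^G_g(B,B)=Bg^{-1}B$ along the way.

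You instead work directly at $g$ and never compute $h^G$. The normality check is replaced by the observation $T_gG\cdot x\subset T_{g\cdot x}\mathcal{O}_x$, so whatever tangential part $\gamma''(0)$ has is killed by $P_{g\cdot x}$. This makes your argument shorter and more general: it uses neither $G\subset O(d)$ nor the equivariance of $P$, and it works for any curve $\gamma$. The paper's computation, in exchange, yields the explicit formula for $h^G$.

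With your concrete choice $\gamma(s)=g\exp(sg^{-1}A)$ the two proofs essentially meet. Here $\gamma$ and $c$ are integral curves of the linear fields $X\mapsto Xg^{-1}A$ and $y\mapsto Ag^{-1}y$; the latter is tangent to all orbits since $Ag^{-1}\in\mathfrak{g}$. These are the paper's extensions translated to $g$, and $\gamma''(0)=Ag^{-1}A$ has zero tangential part when $G\subset O(d)$.

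This also settles the one soft spot in your write-up. Extending $c'$ to a tangent vector field is not always possible for an arbitrary curve with $c'(0)=0$, but it is for these curves. In any case, $h^N(c',c')=P^N(c'')$ holds for every curve by the Gauss formula along curves.
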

This result implies that the mean curvature $H(g\cdot x)$
of $\mc O_x$ at $g \cdot x$ satisfies
$
H(g\cdot x) = P_{g \cdot x} (V_0(g)x).
$
Taking $V = V_0$ and $J = J_0$, and
applying Proposition~\ref{prop:log_volume_mean_curvature}
we obtain
\begin{align*}
dY_t = d(g_t \cdot x) = 
(-\nabla \log \vol \mc O_{g_t \cdot x} - Q_{g_t \cdot x}(V_0(g_t)x)))dt + \sqrt{2} L_{g_t, x}J(g_t)dB_t'.
\end{align*}
We therefore nearly have what we wanted: it only remains
to get rid of the $Q_{g_t \cdot x} (V_0(g_t)x)$ term.
But because this term is in $T_x \mc O_x$,
it is in the image of $ L_{g_t, x}$,
and so
we can remove it by introducing
$$
V_1(g_t) :=-L_{g_t, x}^{\dagger}Q_{g_t \cdot x}(V_0(g_t)x).
$$ Since $V_1$ is always in $T_{g_t}G$, including it in the drift will not
violate property 1 above. The following result
sums up this discussion. A detailed proof is provided in Appendix~\ref{appendix:SDE_proof}.
\begin{theorem}[Brownian motion on an orbit]
\label{thm:BM_on_orbit}
Let $J_0, V_0, V_1$ be as above, and consider~\eqref{eqn:candidate_g_diffusion} 
with $V= V_0 + V_1$ and $J = J_0$. Then $g_t$ 
is a well-defined stochastic process on $\mathbb{R}^{d \times d}$,
which remains in $G$
at all times, and
$g_t \cdot x$ is a
Brownian motion on $\mc O_x$ initialized at $x$.
\end{theorem}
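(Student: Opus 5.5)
The proof has three parts: existence and uniqueness for the $g$-SDE, invariance of $G$, and identification of the law of $g_t\cdot x$ via Theorem~\ref{thm:embedded_manifold_BM}. Throughout, $x$ is fixed and assumed regular, so $\mc O_x$ is a compact embedded submanifold.

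\textbf{Well-posedness.} The coefficients $V_0+V_1$ and $J_0$ are first defined on $G$. The map $g\mapsto L_{g,x}$ is smooth on $G$, and its rank is constant and equal to $\dim \mc O_x$, because
\[
L_{g,x}A = A x = g\,(g^{-1}A)\,x
\]
is surjective onto $T_{g\cdot x}\mc O_x$. Pseudo-inverses of constant-rank smooth families are smooth, so $J_0$, $V_0$ and $V_1$ are smooth on the compact manifold $G$. I would extend them to smooth, compactly supported coefficients on $\R^{d\times d}$ by multiplying by a cutoff equal to $1$ on a tubular neighbourhood of $G$, precomposed with the nearest-point projection. Standard Lipschitz theory then gives a unique strong solution on $\R^{d\times d}$.

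\textbf{Invariance of $G$.} This is the step I expect to be the main obstacle. I would test against the defining functions $\Phi(g)=gg^\top-I$, and against local defining functions near $G$ if $G$ has several components or $gg^\top=I$ alone does not cut it out. The key observation is that $V_0$ is precisely the Itô correction for diffusion matrix $J_0$: with $JJ^\top=\sum_i e_ie_i^\top$, where $e_i=L^\dagger_{g,x}u_i$ and $(u_i)$ is an orthonormal basis of $T_{g\cdot x}\mc O_x$, the tangential vectors $e_i$ satisfy
\[
\tfrac12\mathrm{Hess}\,\phi\,(e_i,e_i)+\langle\nabla\phi,h^G(e_i,e_i)\rangle=0
\]
for any $\phi$ vanishing on $G$. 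This is the same computation as in the proof of Theorem~\ref{thm:embedded_manifold_BM}; the factor $\sqrt2$ in the noise is what makes the constants match. Since $V_1$ and the noise are tangent, Itô's formula gives $d\phi(g_t)=0$ while $g_t\in G$. To make this rigorous, I would use a distance-squared function to $G$ on the tubular neighbourhood and show that its Itô differential is bounded by $C\,\mathrm{dist}^2$ plus a martingale, then conclude with Gronwall. Alternatively, one can run the intrinsic SDE on $G$ and use uniqueness.

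\textbf{Law of $g_t\cdot x$.} Apply Itô to $F(g)=g x$. This map is linear in $g$, so there is no second-order term, and
\[
dZ_t = V(g_t)\,x\,dt+\sqrt2\,J_0(g_t)\,dB'_t\,x .
\]
By Lemma~\ref{lem:second_fund_forms_eq} and Proposition~\ref{prop:log_volume_mean_curvature}, the normal part of $V_0x$ equals $H(Z_t)$, and $V_1x$ exactly cancels the tangential part, so the drift is $H(Z_t)$. The diffusion coefficient $L_{g,x}J_0$ has $L J_0 J_0^\top L^\top=Q_{Z_t}$. The equation therefore becomes
\[
dZ_t=H(Z_t)\,dt+\sqrt2\,\sigma_t\,dB'_t
\]
with $\sigma_t\sigma_t^\top=Q_{Z_t}$. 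By Lévy's characterization, $\sigma_t\,dB'_t$ can be rewritten as $Q_{Z_t}\,dW_t$ for a Brownian motion $W$ on an enlarged space. Hence $Z_t$ is a weak solution of~\eqref{eqn:embedded_manifold_BM} with $M=\mc O_x$, and uniqueness in law from Theorem~\ref{thm:embedded_manifold_BM} identifies $Z_t$ as Brownian motion on $\mc O_x$ started at $x$.
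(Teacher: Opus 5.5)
Your proposal is correct and follows the paper's proof essentially step for step. You use the constant rank of $L_{g,x}$ to get smooth pseudo-inverses and a Lipschitz extension, run an Itô computation against a function vanishing on $G$ in which $V_0$ cancels the Itô correction of $J_0$, and conclude with Lemma~\ref{lem:second_fund_forms_eq} and uniqueness in law for~\eqref{eqn:embedded_manifold_BM}. Your Gronwall (or intrinsic-SDE) argument also usefully repairs the paper's somewhat circular invariance step, which applies the cancellation identity at $g_t$ and so presupposes $g_t\in G$.

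One slip to fix: the geometric identity is $\mathrm{Hess}\,\phi(v,v)+\langle\nabla\phi,h^G(v,v)\rangle=0$ with no factor $\tfrac12$. For example, with $\phi(g)=gg^\top-I$ and $v=Ag$ you get $-2A^2+2A^2=0$. It is the $\sqrt2$ in the noise that turns the Itô correction into a full Hessian, so that $V_0$ cancels it. Regularity of $x$ is also unnecessary, since Lemma~\ref{lem:second_fund_forms_eq} alone identifies the drift as $H$ for every orbit.
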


To the best of our knowledge, the above construction
of Brownian motion on the orbit $\mc O_x$ is novel.
\subsection{The full construction}
The full construction uses the same ideas as the
above with one significant difference.
The above proof ansatz can only {\it add} noise in
$Q$ directions, while Theorem~\ref{thm:main}
states an equivalence between processes with {\it a priori} 
different amounts of noise in the $Q$ directions.
The full construction gets around this by proving
equivalence of the 
SDEs~\eqref{eqn:projected_SDE_full}
and~\eqref{eqn:full_isotropic_Langevin} 
to a third process, with more noise on $Q$ directions than
either~\eqref{eqn:projected_SDE_full}
and~\eqref{eqn:full_isotropic_Langevin}.
Finally, once the full process $g_t$ has been constructed, Theorem~\ref{thm:main} follows
 from Lemma~\ref{lem:remain_G_invariant},
 which (as in the PDE proof) allows us to reduce
 to testing against $G$-invariant test functions.
 Full details are provided in Appendix~\ref{appendix:SDE_proof}.

 \paragraph*{Alternative proof by PDE analysis.}
 In Appendix~\ref{appendix:proof_by_PDE}
 we also provide an alternative proof
 by a PDE based analysis.
 The main advantage of this proof is that
 it only
 requires
 well-posedness of the corresponding weak
 Fokker--Planck equations and is rather direct.
 Nonetheless, it does not provide
 a conceptual explanation for the equivalence
 of Theorem~\ref{thm:main} as does the above 
 approach. It is also interesting
 to note that the core geometric
 fact underlying the proof by PDE
 analysis, Lemma~\ref{lem:hessian_G_invariant},
 is apparently completely distinct from that
 of the proof by SDE coupling, Lemma~\ref{lem:second_fund_forms_eq}.

\section{Discussion}

Motivated by the study of SGD for over-parameterized
models, in this paper we analyzed the SDE~\eqref{eqn:projected_SDE_full},
which features a diffusion matrix with
noise projected by different
amounts in normal versus tangential directions
to an isometric group action. 
Our main result, Theorem~\ref{thm:main},
identifies a novel type of implicit regularization
for this SDE, where particles are pushed
towards orbits with small volume.
We proved this result by constructing
a coupling of the SDE~\eqref{eqn:projected_SDE_full}
with a second
SDE on the group $G$,
and critically using the fact that the mean curvature of the orbits
is the negative gradient of the log-volume,
Proposition~\ref{prop:log_volume_mean_curvature}.
Overall, the results in this work
provide evidence for a close connection between
the symmetries of a
model architecture and its
implicit regularization,
and we believe
that further investigations into this connection
is an exciting direction for future research.
\paragraph*{Acknowledgements.}
GM was supported by the NSF grant DMS 2407055 and the Erik Ellentuck Fellow Fund at the Institute for Advanced Study, Princeton. 
\bibliographystyle{alpha}
\bibliography{annot}
\appendix 
\section{Omitted proofs from Section~\ref{sec:prelim}}
\label{appendix:prelim_proofs}

\begin{proof}[Proof of Proposition~\ref{prop:smoothness_of_P}] Let $A_1, \ldots, A_m \in \mathfrak{g}$
be such that $A_1 x, \ldots, A_m x$
span $T_x\mc O_x$. For $y \in \R^d$, consider
the vector fields $V_1(y) := A_1  y,\ldots,  V_m(y) :=
A_m y.$ 
We first claim that by continuity,
$(V_i(y))_{i = 1}^m$ must 
remain linearly independent for $y$ sufficiently
close to $x$. Indeed, consider the $d \times m$ 
matrix $B_{y}$ whose $i$th column is $V_i(y)$.
Then $B_{y}$ has rank $m$ if and only
if $\det(B_{y}^\top B_{y}) \neq 0$, and the latter is a continuous
condition which holds at $y = x$, so must hold in a neighborhood
around $x$.

Therefore, $(V_i(y))_{i = 1}^m$ must 
remain linearly independent for $y$ in some neighborhood
of $x$. Notice that $V_i(y) \in T_y\mc O_y$
for each $i$, and by the fact that $m$ is the maximal orbit
dimension they must in fact span $T_y \mc O_y$.
We may then take their Graham-Schmidt orthonormalization to obtain
$(W_i(y))_{i= 1}^m$ which are orthonormal and 
span $T_y \mc O_y$. Because the $V_i$ are smooth functions of $y$, so are $W_i$,
implying the first claim.
But we can write
$$
Q_y = \sum_{i =1}^m W_i(y)W_i(y)^\top, \quad \quad 
P_y = I - Q_y,
$$ yielding the result.
\end{proof}

\begin{proof}[Proof of Proposition~\ref{prop:log_volume_mean_curvature}]
We first establish smoothness,
before turning to the identity $H_x = \nabla\log \vol \mc O_x$.

To show that $\vol \mc O_x$ is smooth,
we first claim that there is a unique smooth structure and
geometry on $\pi(\R^d_{\pr}) \subset \R^d/G$ such that
the quotient map $\pi \colon \R^d \to \R^d/G$ is a smooth
submersion on $\R^d_\pr$.
Indeed, around any principal orbit there
is a geodesic slice
$\Sigma := \{x + v \colon v \in (T_x\mc O_x)^{\perp},
\, \, \|v\| < \eps \}$
for some $\epsilon > 0$, which
has the property that for all $y \in \Sigma$,
we have $G_x = G_y$~\cite[Section 2.1.6]{berndt2016submanifolds}.
The quotient $G/G_x$ is a smooth manifold~\cite[Theorem 21.17]{lee2013introduction},
and there is then a diffeomorphism $G\cdot \Sigma \to G/G_x \times \Sigma$.
Using this diffeomorphism, we can take coordinates $\phi \colon U \to \phi(U)$ around $x$ with the property
that $\phi(U)$ is an open cube in $\R^m \times \R^n$
and an orbit intersects $U$ either not at all or in a slice of the
last $n$ coordinates.
With these coordinates, the rest of the proof goes
in the standard way~\cite[Theorem 21.20]{lee2013introduction};
we sketch the details.

Let $V:= \pi(\Sigma)$; because $G_x = G_y$
for all $y \in \Sigma$, $\pi|_{\Sigma}$ is a bijection
onto its image. We then use the previous
coordinates to define the
smooth structure at $V$ by first passing through $\pi|_{\Sigma}^{-1}$.
In these coordinates, $\pi$ becomes a projection onto its
last $n$ coordinates, so is certainly smooth.
All that remains is to verify that the smooth structure
is compatible, but this follows from the
fact that the transition maps respect the orbits: points
are in the same orbit if and only if they have the same 
last $n$ coordinates, and so the transition maps
are smooth when restricted to a slice of the first $m$ coordinates.
With this structure, $\pi|_{\R^d_{\pr}}$ becomes a diffeomorphism
onto its image.
Using the maps $\psi:= \pi|_{\Sigma}^{-1}$ we define the
inner product of two tangent vectors $u,v \in T_p\R^d/G$
via $\langle u, v\rangle_p := \langle d\psi_p[u], d\psi_p[v]\rangle$;
this definition is independent of the choice of $\psi$
because $G$ acts by isometries.
This smooth structure and geometry is unique by the characteristic
property of smooth Riemannian submersions.

By abuse of notation, we write $\vol \mc O_p \colon \R^d/G \to \R$ for $\vol \mc O_{\pi^{-1}(p)}$.
With this geometry, we know that $\vol \mc O_p$ is a smooth function
on the image $\pi(\R^d_\pr)$, 
as it is simply the volume of the fibers of a smooth Riemannian
submersion whose fibers are compact, so is smooth.
By composition with $\pi$, we find that $\vol \mc O_x$ is smooth
at $x \in \R^d_{\pr}$.

Let us now turn to
the identity
$\nabla \log \vol \mc O_x = -H(x)$.
Let $\bar h(x) \colon \R^d \to \R$
be a smooth,
$G$-invariant function compactly supported within $\R^d_{\pr}$,
and write $\bar h = h \circ \pi$. By the previous part
of the proof, we know that the quotient map $\pi \colon \R^d \to \R^d/G$ 
is a smooth Riemannian submersion from $\R^d_{\pr}$ to $M := \pi(\R^d_\pr)$,
and since
$\bar h$ is supported in this set, the smooth co-area formula (see, e.g.~\cite[Section 2.6]{simon2014introduction}) yields
$$
\int \bar h(x) d x= \int h(p)\vol \mc O_p  d \vol_{M}(p).
$$
The idea is to use this fact combined with
integration by parts on $M$. Consider a smooth vector field $\bar V$ on $\R^d$,
compactly
supported within $\R^d_{\pr}$. Assume for now that
$\bar V \in (T_x \mc O_x)^{\perp}$ and $\bar V$ is $G$-equivariant,
so that $\bar V_{g\cdot x} = g\bar V_{x}$.
Then there exists a smooth vector field $V$ on $M := \pi(\R^d_\pr)$ such that $ V_{\pi(x)} = d\pi_x[\bar V_x]$ for all $x \in \R^d_{\pr}$.
We have
$$
\div(\bar V) = \div(V) \circ \pi - \langle \bar V, H_x \rangle.
$$ We include a proof of this
fact for the reader's convenience
as Lemma~\ref{lem:horizontal_divergence}
below.
Using this formula with the divergence theorem, we find that
\begin{align*}
    \int \langle \bar V, \nabla \log \vol \mc O_x \rangle d x
    &= \int \langle V, \nabla \vol \mc O_p \rangle d \vol_M(p)
    = -\int \div(V) \vol \mc O_p d \vol_M(p) \\
    &=-\int (\div(\bar V) + \langle \bar V, H_x \rangle) d x = 
    -\int \langle \bar V, H_x \rangle d x.
\end{align*} 
Given a general smooth vector field $W$,
compactly supported in $\R^d_{\pr}$,
we can transform it into a $G$-equivariant vector field
in $(T_x \mc O_x)^{\perp}$
by taking $\bar V(x) := \int g^\top W_{g\cdot x} d \mathrm{unif}_G(g)$,
where $\mathrm{unif}_G(g)$ denotes the normalized Haar measure on $G$.
Indeed, since $\nabla \log \vol \mc O_x \in (T_x \mc O_x)^{\perp}$
we can assume $W$ is too,
and notice that since $g$ acts by isometries we 
can use Proposition~\ref{prop:gradient_of_invariant_function}
to find that
$$
\int \langle W(x), \nabla \log \vol \mc O_x \rangle
d x = \iint \langle W(g\cdot x), \nabla \log \vol \mc O_{g\cdot x} \rangle 
d x d \mathrm{unif}_G(g)
= \int \langle \bar V(x), \nabla \log \vol \mc O_x \rangle d x.
$$ Using the analogous formula for $\int \langle  W, H_x\rangle d x$,
we obtain
$$
\int \langle W, \nabla \log \vol \mc O_x + H_x \rangle d x =0,
$$ for arbitrary smooth vector fields $W$ supported in $\R^d_{\pr}$.
Proposition~\ref{prop:smoothness_of_P}
implies that the mean curvature is a smooth function on $\R^d_{\reg}$,
and so the formula holds on $\R^d_{\reg}$
since $\R^d_{\pr}$ is dense in $\R^d$.
\end{proof}

\begin{lemma}[Divergence of horizontal
vector field]\label{lem:horizontal_divergence} Suppose
$\bar V$ is a smooth
vector field on an open set $S \subset \R^d_{\pr}$,
such that $\bar V_x \in (T_x\mc O_x)^{\perp}$ for all $x \in S$,
and assume there is 
a smooth vector field $V$ on
$\pi(S)$ such that $V_{\pi(x)}
= d\pi_x[\bar V]$.
Then
$$
\div(\bar V) = \div(V)\circ \pi 
- \langle \bar V, H\rangle,
$$ where $H$ is the mean curvature
vector field for the orbits $\mc O_x$.
\end{lemma}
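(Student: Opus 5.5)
The plan is to compute $\div(\bar V)$ at a point $x\in S$ in an orthonormal frame of $\R^d$ adapted to the orbit decomposition. Near $x$ we take a smooth orthonormal frame $W_1,\dots,W_m$ spanning $T_y\mc O_y$, which exists by Proposition~\ref{prop:smoothness_of_P}. We complete it by a smooth orthonormal frame $E_1,\dots,E_n$ of the normal spaces $(T_y\mc O_y)^\perp$, chosen as horizontal lifts of a local orthonormal frame $e_1,\dots,e_n$ on $\pi(S)$. This is possible because $\pi$ is a Riemannian submersion on $\R^d_{\pr}$, as established in the proof of Proposition~\ref{prop:log_volume_mean_curvature}. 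Since the Euclidean divergence is the trace of $D\bar V$, it splits as
\[
\div(\bar V)=\sum_{i=1}^m\langle D_{W_i}\bar V,W_i\rangle+\sum_{j=1}^n\langle D_{E_j}\bar V,E_j\rangle ,
\]
and we treat the two sums separately.

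\emph{Vertical sum.} Because $\bar V\perp W_i$ identically, differentiating $\langle\bar V,W_i\rangle=0$ along $W_i$ gives
\[
\langle D_{W_i}\bar V,W_i\rangle=-\langle\bar V,D_{W_i}W_i\rangle .
\]
Since $\bar V$ is normal, only the normal component of $D_{W_i}W_i$ contributes, and that component is $h_y(W_i,W_i)$. Summing over $i$ yields $-\langle\bar V,H\rangle$, which is exactly the curvature term in the claim.

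\emph{Horizontal sum.} We want to show that $\sum_j\langle D_{E_j}\bar V,E_j\rangle=\div(V)\circ\pi$. Write $\bar V=\sum_k (v_k\circ\pi)E_k$ with $V=\sum_k v_k e_k$; this representation follows from $V_{\pi(x)}=d\pi_x[\bar V]$ and the fact that $\bar V$ is horizontal. Then
\[
\langle D_{E_j}\bar V,E_j\rangle=E_j(v_j\circ\pi)+\sum_k (v_k\circ\pi)\langle D_{E_j}E_k,E_j\rangle .
\]
The first term equals $(e_jv_j)\circ\pi$ because $d\pi[E_j]=e_j$. For the second term we use O'Neill's formula: for basic horizontal fields, the horizontal part of $D_{E_j}E_k$ is the horizontal lift of $\nabla^M_{e_j}e_k$, and its vertical part is orthogonal to $E_j$ anyway. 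Hence $\langle D_{E_j}E_k,E_j\rangle=\langle\nabla^M_{e_j}e_k,e_j\rangle\circ\pi$. Summing over $j,k$ reproduces the frame formula for $\div_M(V)$, composed with $\pi$.

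The main obstacle is this horizontal step, namely justifying O'Neill's identity in our setting. I would prove it directly via Koszul's formula. Brackets of basic fields are $\pi$-related to brackets downstairs, and inner products of basic fields are constant along fibers, since they are pulled back from $M$. These two facts make the Koszul expressions upstairs and downstairs agree in horizontal directions. The remaining work is routine bookkeeping: checking that the frames can be chosen smoothly on a neighborhood, shrinking $S$ if necessary, and noting that the identity is pointwise, so local frames suffice.
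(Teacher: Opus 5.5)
Your proposal is correct and follows essentially the same route as the paper. You split the Euclidean trace into an orbit-tangent frame and a horizontal frame. You obtain $-\langle \bar V, H\rangle$ by differentiating $\langle \bar V, W_i\rangle = 0$. You identify the horizontal sum with the divergence downstairs via O'Neill's identity for basic fields.

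The only difference is in how that identity is handled. The paper cites it (Lee, Problem 5-6(b)) and applies it directly to $\bar V$. You instead expand $\bar V$ in a frame of horizontal lifts $E_j$ and prove the identity with Koszul's formula. This makes your version more self-contained, and choosing the $E_j$ explicitly as horizontal lifts makes precise a step the paper leaves implicit.
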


\begin{proof}[Proof of Lemma~\ref{lem:horizontal_divergence}]
    Let $(\bar A_i)_{i = 1}^m$ be 
    a smooth local orthonormal frame for $(T_x \mc O_x)^\perp$ and $(B_j)_{j = 1}^n$ be a smooth local orthonormal frame for $T_x \mc O_x$.
    Using the fact, established in the proof of Proposition~\ref{prop:log_volume_mean_curvature},
    that the projection $\pi \colon \R^d_{\pr}
    \to \pi(\R^d_{\pr})$ is a smooth Riemannian
    submersion, we find that
    \begin{align*}
        \div(\bar{V}) & = \sum_{i =1 }^m \langle \nabla_{\bar{A}_i} \bar{V}, \bar{A}_i \rangle + \sum_{j = 1}^n \langle \nabla_{B_j} \bar{V}, B_j \rangle \, , \\
        & = \sum_{i = 1}^m \langle d \pi_x [\nabla_{\bar{A}_i} \bar{V}], d \pi_x[\bar{A}_i] \rangle_{\pi(x)} + \sum_{j=1}^n \langle \nabla_{B_j} \bar{V}, B_j\rangle \, .
    \end{align*}
    Now, let $\tilde{\nabla}$ be the Levi-Civita connection on $\pi(\R^d_\pr)$
    and write $A_i := d\pi[\bar V_i]$.
    By~\cite[Problem 5-6(b)]{lee2018introduction},
   we have $d \pi [\nabla_{\bar{A}_i} \bar{V}] = \tilde{\nabla}_{A_i} V$.
   Hence the first term is exactly $\sum_{i = 1}^m \langle \tilde{\nabla}_{A_i} V, A_i \rangle_{\pi(x)} = \div(V)\circ \pi$. For the second term, we 
   may calculate
    \begin{align*}
        \sum_{j = 1}^n \langle \nabla_{B_j} \bar{V}, B_j \rangle & = \sum_{j= 1}^n  B_j \cdot \langle \bar{V}, B_j \rangle - \sum_{j = 1}^n \langle \bar{V}, \nabla_{B_j} B_j \rangle  \, , \\
        & = - \langle \bar{V}, P(\sum_{j=1}^n \nabla_{B_j} B_j \rangle) \rangle \, , \\
        & = - \langle \bar{V}, H_x \rangle \, ,
    \end{align*}
    where in the second line we used the fact that 
    $\bar V \in (T_x \mc O_x)^{\perp}$ twice.
\end{proof}

\section{Dynamics remain $G$-invariant}
\label{appendix:dynamics_remain_g_invariant}
In this section we first collect
several useful calculations for $G$-invariant
functions, and then prove 
Lemma~\ref{lem:remain_G_invariant} on the
fact that the dynamics remain $G$-invariant.

\subsection{Useful calculations}
We first collect the following useful calculations,
which describe how gradients and the projection
matrices transform under the action of $G$.
\begin{prop}[Gradient of $G$-invariant functions]
\label{prop:gradient_of_invariant_function}
    Let $h$ be invariant under the action of $G$. Then for all $g \in G$ and all $x \in \mathbb{R}^d$, it holds that $g \cdot \nabla h(x) = \nabla h(g\cdot x)$.
\end{prop}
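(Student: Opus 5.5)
The plan is to differentiate the invariance identity $h(g\cdot x) = h(x)$ with respect to $x$ and then use that $g$ is orthogonal. Under Assumption~\ref{assum:isometric}, $g$ acts as an orthogonal matrix, so $g\cdot x = gx$ with $g^\top g = I$. Fix $g \in G$ and define $\tilde h(x) := h(gx)$. By invariance, $\tilde h \equiv h$ as functions on $\R^d$, so their gradients agree everywhere:
$$
\nabla h(x) = \nabla \tilde h(x).
$$

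The chain rule gives $\nabla \tilde h(x) = g^\top \nabla h(gx)$, because the Jacobian of $x \mapsto gx$ is $g$. Combining this with the previous display yields
$$
\nabla h(x) = g^\top \nabla h(g\cdot x).
$$
Multiplying on the left by $g$ and using $gg^\top = I$ then gives $g\cdot\nabla h(x) = \nabla h(g\cdot x)$, which is the claim.

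The only point that needs care is regularity, since the identity presupposes that $h$ is differentiable. If $h$ is differentiable only on an open $G$-invariant set, such as $\R^d_{\reg}$ (the case relevant for $\log\vol\mc O_x$), the same argument applies pointwise on that set: $x$ lies in the set if and only if $g\cdot x$ does, and $\tilde h = h$ holds on a neighborhood of $x$.

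An alternative argument uses directional derivatives. For any $v$,
$$
\langle \nabla h(gx), gv\rangle = \frac{d}{ds}\Big|_{s=0} h(gx+sgv) = \frac{d}{ds}\Big|_{s=0} h(x+sv) = \langle \nabla h(x), v\rangle = \langle g\nabla h(x), gv\rangle.
$$
Since $g$ is invertible, the vectors $gv$ range over all of $\R^d$, so $\nabla h(gx) = g\nabla h(x)$. No step here presents a real obstacle.
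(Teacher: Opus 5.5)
Your proof is correct and is essentially the same as the paper's: differentiate $h(g\cdot x) = h(x)$ to get $\nabla h(x) = g^\top \nabla h(g\cdot x)$, then use $g^\top = g^{-1}$ from $G \subset O(d)$. Your regularity remark about open $G$-invariant sets such as $\R^d_{\reg}$ is a sensible addition, since the paper later applies this identity to $\log \vol \mc O_x$.
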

\begin{proof}[Proof of Proposition~\ref{prop:gradient_of_invariant_function}]
For all $g \in G$ and $x \in \R^d$, we have $h(g\cdot x) = h(x)$.
Thus
$$
\nabla h(x) = g^\top\nabla h(g\cdot x).
$$ But since $g \in O(d)$ by Assumption~\ref{assum:isometric},
$g^\top = g^{-1}$. Hence the result.
\end{proof}

\begin{prop}[Transformation of horizontal projection]
\label{prop:horizontal_proj}
    Let $P_x$ be the horizontal projection onto the 
    orbit $\mc O_x$ and $Q_x := I - P_x$.
    Then for all $x\in \mathbb{R}^d$ and $g \in G$:
    $$
 gP_x = P_{g\cdot x}g, \quad \quad 
    gQ_x = Q_{g\cdot x} g\,.
    $$
\end{prop}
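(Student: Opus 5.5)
The plan is to show that $g$ maps the tangent space of the orbit at $x$ onto the tangent space of the orbit at $g\cdot x$, and then use that $g$ is orthogonal. Since $\mc O_{g\cdot x} = \mc O_x$, the orbit through $g\cdot x$ is the same embedded submanifold as the orbit through $x$. Consider the map $\Phi_g \colon \R^d \to \R^d$, $\Phi_g(y) = gy$. It is a linear isometry that maps $\mc O_x$ into itself, since $g\cdot(h\cdot x) = (gh)\cdot x$. Its inverse $\Phi_{g^{-1}}$ does the same, so $\Phi_g$ restricts to a diffeomorphism of $\mc O_x$.

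Concretely, I would describe $T_x\mc O_x = \{Ax \colon A \in \mathfrak g\}$, where $\mathfrak g \subset \R^{d\times d}$ is the Lie algebra of $G$. This holds because curves $t\mapsto e^{tA}x$ generate the orbit's tangent directions; the same description was used in the proof of Proposition~\ref{prop:smoothness_of_P}. Then
$$
g(Ax) = (gAg^{-1})(gx),
$$
and $gAg^{-1} = \mathrm{Ad}_g A \in \mathfrak g$, since $g e^{tA} g^{-1} = e^{t gAg^{-1}} \in G$. Hence $g\,T_x\mc O_x \subset T_{g\cdot x}\mc O_{g\cdot x}$. Applying the same argument with $g^{-1}$ at the point $g\cdot x$ gives the reverse inclusion, so
$$
g\,T_x\mc O_x = T_{g\cdot x}\mc O_{g\cdot x}.
$$
Alternatively, the equality follows from a dimension count, since $g$ is invertible.

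Since $g\in O(d)$, it preserves orthogonal complements, so
$$
g\,(T_x\mc O_x)^\perp = (T_{g\cdot x}\mc O_{g\cdot x})^\perp.
$$
The operator $gP_xg^\top$ is symmetric and idempotent because $g^\top g = I$. Its range is $g\,\mathrm{im}(P_x) = (T_{g\cdot x}\mc O_{g\cdot x})^\perp$. An orthogonal projection is determined by its range, so $gP_xg^\top = P_{g\cdot x}$, which is $gP_x = P_{g\cdot x}g$. The statement for $Q$ then follows by subtracting from $gI = Ig$.

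There is no real obstacle here. The only point needing care is justifying $T_x\mc O_x = \mathfrak g\,x$ and $\mathrm{Ad}_g\mathfrak g = \mathfrak g$ at every point, including singular orbits. This is standard, because $\mc O_x$ is the image of the smooth map $G\to\R^d$, $h\mapsto hx$, which has constant rank; the tangent space is then the image of its differential at the identity.
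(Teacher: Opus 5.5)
Your proof is correct and is essentially the paper's argument. Both use $T_x\mathcal{O}_x = \{Ax \colon A \in \mathfrak{g}\}$ and $g\mathfrak{g}g^{-1} = \mathfrak{g}$ to see that the orthogonal matrix $g$ carries $(T_x\mathcal{O}_x)^{\perp}$ onto $(T_{g\cdot x}\mathcal{O}_x)^{\perp}$, and then identify the conjugated projection by its range; your explicit check that $gP_xg^\top$ is a symmetric idempotent is slightly cleaner than the paper's closing step, which relies on idempotence and a dimension count stated as $\dim \mathcal{O}_x$ where the rank should be the codimension of the orbit.
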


\begin{proof}[Proof of Proposition~\ref{prop:horizontal_proj}]
Observe that if $\mathfrak{g}$ is the Lie algebra
of $G$, then $T_x \mc O_x = \{Ax \colon A \in \mathfrak{g}\}$.
But notice that, for each $g \in G$,
$\{gAg^{-1} \colon A \in \mathfrak{g} \} = \mathfrak{g}$, so that
for any $A \in \mathfrak{g}$ and $v \in \R^d$
$$
\langle g^{-1}P_{g \cdot x}gv,Ax \rangle
= \langle P_{g \cdot x}g v, gAx \rangle
= \langle g v, P_{g\cdot x} gAx \rangle=
\langle gv, P_{g\cdot x} gAg^{-1} (g\cdot x) \rangle = 0,
$$
where the first equality follows by the fact that $g \in O(d)$
by Assumption~\ref{assum:isometric}, the second equality
is by symmetry of $P$ since it is an orthogonal projection,
and the fourth equality is because $gAg^{-1} \in \mathfrak{g}$,
so that $gAx = gAg^{-1}(gx) \in T_{gx}\mc O_x$.
On the other hand, it is clear that $L = g^{-1}P_{g\cdot x} g$
is such that $L^2 = L$; and finally that 
$\mathrm{dim} L = \dim \mc O_x$.
This implies that $g^{-1}P_{g\cdot x} g = P_x$.
\end{proof}

\subsection{Proof of Lemma~\ref{lem:remain_G_invariant}}
\begin{proof}[Proof of Lemma~\ref{lem:remain_G_invariant}]
Under our assumptions,
Equation~\eqref{eqn:projected_SDE_full}
has a strong solution by standard SDE theory~\cite[Theorem 5.2.1]{oksendal2013stochastic}
and the solution is weakly unique~\cite[Lemma 5.3.1]{oksendal2013stochastic}.
It then suffices to show that $g \cdot X_t$ is a weak
solution of~\eqref{eqn:projected_SDE_full}
with the same initial condition for all $g \in G$,
since by weak uniqueness
it must be the same in law as $X_t$ itself,
establishing $G$-invariance.

Indeed, by $G$-invariance and Itô's lemma,
we may calculate that for any $g \in G$, we have
$$
d(g \cdot X_t)
= -g\cdot \nabla f(X_t)dt + g\cdot \sqrt{2}(\alpha(X_t)
P_{X_t} + \beta(X_t)Q_{X_t})dB_t.
$$ By Proposition~\ref{prop:gradient_of_invariant_function}
and Proposition~\ref{prop:horizontal_proj}
this is
$$
d(g \cdot X_t)
= -\nabla f(g \cdot X_t)dt
+ \sqrt{2}(\alpha(X_t)
P_{g \cdot X_t} +\beta(X_t)Q_{g \cdot X_t})
gdB_t.
$$ Finally using $G$-invariance of $\alpha$ and
$\beta$, we obtain
$$
d(g \cdot X_t)
= -\nabla f(g \cdot X_t)dt
+ \sqrt{2}(\alpha(g \cdot X_t)
P_{g \cdot X_t} + \beta(g\cdot X_t)Q_{g \cdot X_t})
gdB_t.
$$ But since $g \in O(d)$,
it follows that
$gdB_t$ is a Brownian motion, and hence
$g \cdot X_t$ is a weak solution of~\eqref{eqn:projected_SDE_full}. Because the initial
condition is $G$-invariant,
it follows that $X_t$ and $g \cdot X_t$
are weak solutions of the same SDE
and thus must have the same marginal laws. This proves
the first claim in the Lemma.

The proof of the second claim is analogous:
by Proposition~\ref{prop:log_volume_mean_curvature},
$\nabla \log \vol \mc O_x$, is smooth
away from the singular orbits. Given our
assumption on $\alpha - \beta$
is follows that 
$(\alpha^2 - \beta^2) \nabla \log \vol \mc O_x$ 
is smooth and compactly supported,
so is globally
Lipschitz. Hence the drift in
Equation~\eqref{eqn:full_isotropic_Langevin}
is globally Lipschitz,
and the diffusion matrix is globally Lipschitz
by assumption.
Applying again~\cite[Theorem 5.2.1]{oksendal2013stochastic}
and~\cite[Lemma 5.3.1]{oksendal2013stochastic}
we find there exists a strong solution to~\eqref{eqn:full_isotropic_Langevin}
and it is weakly unique. The remainder of the argument
is identical to the above and so is omitted.
\end{proof}

\section{Proof by SDE construction}
\label{appendix:SDE_proof}

\subsection{Proof of second fundamental form identity}
\label{subsec:second_fund_forms_eq}
\begin{proof}[Proof of Lemma~\ref{lem:second_fund_forms_eq}]
We first compute the second fundamental
form $h^G_g(A, A)$, for $A \in T_gG$. Let us begin
by considering the case where $g = I$. Given
any smooth vector field $W$ on $\R^d$ with
$W_I = A$ and such that $W_g \in T_gG$ for all $g\in G$, the second fundamental form is
$$
h^G_I(A, A) = P^G_I(\nabla_{W} W),
$$ where $P^G_I$ is the projection form $T_I\R^d$ to the normal
space $N_IG$ and $\nabla$ denotes the Euclidean connection.
We may therefore take $W$ to be $W_X:= XA$ for all $X \in \mathbb{R}^{d \times d}$,
which will be a 
smooth vector field on $G$ and such that $W_I = A$. Then
$$
(\nabla_{W}W)_I = \lim_{t\to 0} \frac{W_{I + tA} - W_{I}}{t}
= A^2.
$$ But then notice that, since $G \subset O(d)$, $A$ must be anti-symmetric,
so that in fact the above is a symmetric matrix. Since
$T_gG$ is a subset of anti-symmetric matrices, it follows that
$\nabla_W W \in N_IG$ already. Hence
$$
h^G_I(A, A) = A^2.
$$ We may then deduce the general formula by the fact that
$g \colon \R^{d \times d} \to \R^{d \times d}$ which takes
$X \mapsto gX$, is an isometry so that for $B \in T_gG$, we have
$$
h^G_g(B, B) = gh^G_I(g^{-1}B, g^{-1}B) = Bg^{-1}B.
$$

Next, we compute the second fundamental form of $\mc O_x$.
Here, we know that each $v \in T_x \mc O_x$ is of the form
$Ax$ for $A \in T_eG$. We can then extend $v$ to the vector
field $V(x') := Ax'$, and again obtain
$$
(\nabla_V V)_x = \lim_{t\to 0} \frac{V(x + tAx) - V(x)}{t}
= A^2x.
$$ Hence if $v = Ax$ then
\begin{align*}
h^{\mc O_x}_x(v, v)= P_x(\nabla_V V)_x = P_x(A^2x) = P_x(h^G_I(A, A)x)&=
P_x(g^{-1}h^G_g(gA, gA)x),
\end{align*}
yielding the result.
Applying Proposition~\ref{prop:horizontal_proj}, this implies
$$
gh_x^{\mc O_x}(v, v) = P_{g\cdot x}
(h_g^G(gA, gA)x).
$$ Since $g \colon \R^d \to \R^d$ is an isometry the left-hand-side
becomes
$$
h_{g\cdot x}^{\mc O_x}(g\cdot v, g \cdot v)
= P_{g\cdot x}(h_g^G(gA, gA)x).
$$ Because $v$ was an arbitrary element of $T_x \mc O_x$,
the result follows.
\end{proof}
\subsection{Proof of Theorem~\ref{thm:BM_on_orbit}}
\begin{proof}[Proof of Theorem~\ref{thm:BM_on_orbit}] Let
$L_{g_t, x}$ be as defined in~\eqref{eqn:Lgx_defn}. Put
\begin{align*}
J_0(g) &:= (L_{g_t, x}^\top  L_{g_t, x})^{\dagger/2} \\
V_0(g) &:= \tr(h_g^G(L^{\dagger}_{g, x},  L^{\dagger}_{g, x})), \\
V_1(g) &:= - L_{g, x}^{\dagger}Q_{g \cdot x}(V_0(g)x),
\end{align*} and
consider the SDE
\begin{equation}\label{eqn:orbit_BM_appendix}
dg_t := (V_0(g_t) + V_1(g_t))dt + \sqrt{2}J_0(g)dB_t',
\quad \quad g_0 = I.
\end{equation}
We first address the question of existence and uniqueness
of a strong solution to this SDE.
To begin, we claim that $J_0, V_0, V_1$ are all
smooth on $G$.
Indeed, $L_{g, x}$ is a smooth function
of $g$, and for all $g \in G$ it has rank $\dim \mc O_x$,
so that $L_{g,x}^{\dagger}$ and $J_0(g)$ are smooth~\cite{constales1998closed}.
Hence $V_0$ is also smooth
on $G$.
Similarly, $Q_{g\cdot x}$ is a smooth
function of $g$, and thus $V_1$ is smooth
as well.
Hence $J_0, V_0, V_1$ are smooth
on $G$,
and thus globally Lipschitz on $G$ by compactness.
With a minor abuse of notation, we may then assume that $J_0, V_0, V_1$
have been extended to globally Lipschitz functions on $\R^{d \times d}$.
Equation~\eqref{eqn:orbit_BM_appendix}
then indeed
has a unique strong solution~\cite[Theorem 5.2.1]{oksendal2013stochastic}.

We next verify that $g_t$ remains in $G$.
Let
$Z \colon \R^{d \times d} \to \R$ be a smooth function such that $Z(g) = 0$ if and only
if $g \in G$ (e.g. a smoothing of 
$d^2(x,G)$). Then observe that $\nabla Z(g) \in (T_gG)^{\perp}$ for all $g \in G$, and thus for all $v \in T_gG$,
\begin{equation}\label{eqn:hessian_2nd_fund_form}
\nabla^2 Z_g[v, v] + \langle h^G_g(v, v), \nabla Z(g) \rangle = 0.
\end{equation}
Using Itô's Lemma, we compute
\begin{align*}
dZ(g_t) &= (\langle \nabla Z(g_t), \tr(h_{g_t}^G(L^{\dagger}_{g_t, x}\cdot, L^{\dagger}_{g_t, x}\cdot)) \rangle + \tr(\nabla^2 Z(g_t) J_0(g_t)^2))dt,
\end{align*} where the $V_1$ and diffusion terms vanish because they
are in $T_{g_t}G$ and thus in the kernel of $dZ_{g_t}$. 
Using the SVD decomposition, we may write
$$
L_{g_t, x} = \sum_{i=1}^m \sigma_i u_i \langle A_i, \cdot \rangle,
$$
where $(u_i)_{i =1}^m$ are orthonormal vectors spanning $T_{g_t \cdot x} \mc O_x$
and $(A_i)_{i = 1}^m$ are orthonormal vectors in $T_{g_t}G$. Writing the above
using this basis, we find
$$
\langle \nabla Z(g_t), \tr(h_{g_t}^G(L^{\dagger}_{g_t, x}\cdot, L^{\dagger}_{g_t, x}\cdot)) \rangle
= \sum_{i = 1}^m \frac{1}{\sigma_i^2}\langle \nabla Z(g_t), h^G_{g_t}(A_i, A_i) \rangle,
$$ and
$$
\tr(\nabla^2 Z(g_t) J_0(g_t)^2) = \sum_{i = 1}^m \frac{1}{\sigma_i^2}\nabla^2 Z_{g_t}[A_i,A_i].
$$ Applying~\eqref{eqn:hessian_2nd_fund_form}, we thus find that $dZ(g_t) = 0$, and so combined with the initialization
at $g_0 = I$ we conclude that $g_t$ remains in $G$ at all times.

We now verify that $(g_t \cdot x)_t$
is a Brownian motion on 
$\mc O_x$.
We may
calculate using Itô's formula
and the definitions that
\begin{align*}
d(g_t \cdot x)
&=L_{g_t, x}(V_0(g_t) + V_1(g_t))
dt + \sqrt{2}L_{g_t, x}J_0(g_t)
dB_t' \\
&=P_{g_t, x}(V_0(g_t)x)dt
+\sqrt{2}L_{g_t, x}J_0(g_t)dB_t'.
\end{align*}
By Lemma~\ref{lem:second_fund_forms_eq},
this is exactly
$$
d(g_t \cdot x)
= H(g_t \cdot x)dt + \sqrt{2}L_{g_t, x}J_0(g_t)dB_t'.
$$
Notice that
$$
h_{g_t, x}J_0(g_t)J_0(g_t)^\top L_{g_t, x}^\top
= Q_{g_t\cdot x}.
$$ Hence by Theorem~\ref{thm:BM_on_orbit}
we have that $(g_t \cdot x)$ has the same
drift and diffusion as a Brownian
motion on $\mc O_x$.
Under our assumptions, weak uniqueness
holds for~\eqref{eqn:BM_on_orbit}~\cite[Theorem 9.4.3]{bogachev2022fokker},
and so
the marginal law of $(g_t \cdot x)_t$ must therefore
be identical to that of Brownian motion on $\mc O_x$.
\end{proof} 

\subsection{Full details of proof by constructing
a stochastic process on $G$}

\begin{proof}[Proof of Theorem~\ref{thm:main}]
{\it Proof approach.}
Let
$S := \supp(\alpha - \beta)$.
Since $S$ is compact and contained within
the set of regular orbits $\R^d_{\reg}$,
which is open by Proposition~\ref{prop:smoothness_of_P},
there exists a smooth bump function $\psi$
which is $1$ on $S$ but supported
on a compact set within $\R^d_{\reg}$~\cite[Proposition 2.25]{lee2013introduction}.
Let $\phi(x) := \E_{g\sim \unif_G}[\psi(g\cdot x)]$,
so that it is a $G$-invariant bump function for $S$.

Using this bump function $\phi$,
the proof of Theorem~\ref{thm:main}
by stochastic calculus consists in showing
that the two stochastic processes~\eqref{eqn:projected_SDE_full}
and~\eqref{eqn:full_isotropic_Langevin} are equivalent
in marginal law
to a {\it third} process:
\begin{equation}\label{eqn:auxiliary_SDE}
dZ_t = -(\nabla f + \phi^2(\alpha^2 + \beta^2)
\nabla \log \vol \mc O)dt 
+ \sqrt{2}(\alpha P
+ \sqrt{\beta^2 + \phi^2(\alpha^2 + \beta^2)}Q)dB_t,
\end{equation} where we suppress
$Z_t$ in the right-hand side.
We use this auxiliary
process because the ansatz of multiplying
$g_t \cdot X_t$ can only increase the amount of noise
in the $Q$ directions, at least when the Brownian motion of $g_t$
is independent of that of $X_t$.

{\it $X_t,Y_t,$ and $Z_t$ have unique strong solutions.} 
We begin by checking that both~\eqref{eqn:projected_SDE_full}
and~\eqref{eqn:full_isotropic_Langevin}
have a unique strong solution;
to this end, let
$S := \supp(\alpha - \beta)$.
First consider $X_t$, namely~\eqref{eqn:projected_SDE_full}.
The drift $\nabla f$ is globally Lipschitz by assumption,
and on $S^c$, $\alpha = \beta$, so
the diffusion matrix is $\alpha I$ which is globally Lipschitz
by assumption. Since $S \subset \R^d_{\reg}$,
Proposition~\ref{prop:smoothness_of_P}
implies that
the matrix $\alpha P + \beta Q$ is smooth;
since $S$ is compact it must be Lipschitz on $S$,
and therefore it is globally Lipschitz.
The equation~\eqref{eqn:projected_SDE_full}
for $X_t$
therefore has a unique strong solution by~\cite[Theorem 5.2.1]{oksendal2013stochastic}.

For $Y_t$, namely~\eqref{eqn:full_isotropic_Langevin},
we again check global Lipschitz-ness of the
drift and diffusion matrix. Here, the drift is globally
Lipschitz because $\nabla f$ is by assumption, and because
the additional term $(\alpha^2 - \beta^2)\nabla \log\vol \mc O$
is smooth on $\R^d_{\reg}$ by 
Proposition~\ref{prop:log_volume_mean_curvature}
and it is supported in $S\subset \R^d_{\reg}$.
The diffusion matrix is $\alpha I$ which
is globally Lipschitz because
$\alpha$ is.
The equation~\eqref{eqn:full_isotropic_Langevin}
therefore has a unique strong solution by~\cite[Theorem 5.2.1]{oksendal2013stochastic}.

For $Z_t$ in~\eqref{eqn:auxiliary_SDE}, we 
verify that the drift and diffusion are globally
Lipschitz. 
Here, the drift is globally
Lipschitz because $\nabla f$ is by assumption, and because
the additional term $\phi^2(\alpha^2 + \beta^2)\nabla \log\vol \mc O$
is smooth on and supported in $\R^d_{\reg}$ by 
Proposition~\ref{prop:log_volume_mean_curvature}.
For the diffusion matrix, first observe that
$$
\beta^2 + \phi^2(\alpha^2 + \beta^2)=
\alpha^2 + 2\phi^2\beta^2,
$$ and since we assume $\alpha > 0$ everywhere,
the square root is smooth.
Hence by Proposition~\ref{prop:smoothness_of_P}
the diffusion matrix is smooth on $\R^d_{\reg}$
and thus smooth everywhere. It is globally
Lipschitz because it is $\alpha I$ outside the support of $\phi$
and otherwise smooth on a compact set.
The equation~\eqref{eqn:auxiliary_SDE}
for $Z_t$
therefore has a unique strong solution by~\cite[Theorem 5.2.1]{oksendal2013stochastic}.
In particular, this solution must be $G$-invariant
by the same argument as in the proof of Lemma~\ref{lem:remain_G_invariant}.

{\it Equivalence of $X_t$ and $Z_t$.} Let us begin by showing that~\eqref{eqn:projected_SDE_full}
is equivalent in marginal law to~\eqref{eqn:auxiliary_SDE}.
Recall the definition of $L_{g,x}$ from~\eqref{eqn:Lgx_defn},
and put
\begin{align*}
V_0(g, x) &:=\phi^2(g\cdot x)(\alpha^2(g\cdot x) + \beta^2(g\cdot x)) \tr(h_{g}^G(L_{g, x}^{\dagger}
    \cdot, L_{g, x}^{\dagger} \cdot)),\\
    V_1(g, x) &:= -L_{g, x}^{\dagger}(V_0(g, x)x), \\
    J_0(g, x) &:= \phi(g\cdot x) \sqrt{\alpha(g\cdot x)^2 + \beta(g \cdot x)^2} \cdot
    (L_{g, x}^\top L_{g, x})^{\dagger/2}.
\end{align*}
We then consider the system of SDEs
\begin{align}
dX_t &= -\nabla f(X_t)dt + \sqrt{2}(\alpha(X_t) P_{X_t}
+ \beta(X_t) Q_{X_t}) dB_t, \quad \quad X_0 \sim \mu \notag \\
    dg_t &= (V_0(g_t, X_t) + V_1(g_t, X_t))dt
    + \sqrt{2}J_0(g_t, X_t)dB_t', \quad \quad g_0 = I.
    \label{eqn:full_gt_dynamics}
    \end{align}

We first verify that equation~\eqref{eqn:full_gt_dynamics}
has a unique strong solution.
Observe that $L_{g, x}$ is a smooth function
of $g$ and $x$, and for all $g \in G$
and $x \in \R^d_{\reg}$,
it has the same rank.
It follows that $L_{g,x}^{\dagger}$
and $(L_{g,x}^\top L_{g,x})^{\dagger/2}$
are smooth for $x \in \R^d_{\reg}$
and $g \in G$~\cite{constales1998closed}.
Hence $V_0$ and $V_1$ are smooth
on $G \times \R^d_{\reg}$.
The matrix $J_0$ is also smooth on $G \times \R^d_{\reg}$
because we assume $\alpha > 0$ everywhere.
But because $\phi$ is supported in $\R^d_{\reg}$,
we find that $V_0, V_1, J_0$ are smooth
on all of $G \times \R^d$.
Because $G$ and $\supp(\phi)$ are compact,
they must be globally Lipschitz in $G \times \R^d$,
and thus can be extended
to globally Lipschitz functions on all of $\R^d \times \R^{d \times d}.$
By the same argument as before, the drift and diffusion
for $dX_t$ are globally Lipschitz as well.
Hence~\eqref{eqn:full_gt_dynamics} has a unique strong solution.

The same argument as in the proof of Theorem~\ref{thm:BM_on_orbit}
shows that $g_t$ then remains in $G$ at all times.
For property P2, we calculate using Itô's Lemma that if $g_t$ follows~\eqref{eqn:full_gt_dynamics}
and $X_t$ follows~\eqref{eqn:projected_SDE_full},
then
\begin{align*}
d(g_t \cdot X_t) &= 
(-g_t\nabla f(X_t) + L_{g_t, X_t}(V_0(g_t, X_t)
+ V_1(g_t, X_t)))dt \\
&+ g_t\sqrt{2}(\alpha(g_t \cdot X_t)
P_{X_t} + \beta(g_t \cdot X_t)Q_{X_t})dB_t
+ L_{g_t, X_t}J_0(g_t, X_t)dB_t'.
\end{align*}
Observe that
$$
L_{g, x}(V_0(g, x)
 + V_1(g, x)) = P_{g\cdot x}(V_0(g, x)x).
$$ Taking an orthonormal basis $v_1,\ldots, v_m$
for $T_{g\cdot x}\mc O_x$ as well as
orthonormal $A_1, \ldots, A_m \in T_gG$ such that $L_{g, x}A_i=\sigma_i v_i$, we find
by Lemma~\ref{lem:second_fund_forms_eq} that
\begin{align*}
P_{g\cdot x}(V_0(g, x)x) &=
\phi(x)^2(\alpha(x)^2 + \beta(x)^2)
\sum_{i = 1}^m \frac{1}{\sigma_i^2}P_{g\cdot x}(h_g^G(A_i, A_i)x)\\
&=\phi(x)^2(\alpha(x)^2 + \beta(x)^2)
\sum_{i = 1}^m h_{g\cdot x}^{\mc O_x}(v_i, v_i) \\
&= \phi(x)^2(\alpha(x)^2 + \beta(x)^2)H(g\cdot x)
= \phi(g\cdot x)^2(\alpha(g\cdot x)^2 + \beta(g \cdot x)^2)
H(g\cdot x).
\end{align*}
By Proposition~\ref{prop:log_volume_mean_curvature}, $H(g \cdot x)
= -\nabla \log \vol \mc O_{g \cdot x}$ for $x \in \R^d_{\reg}$, so that
\begin{align*}
d(g_t \cdot X_t)
&= - (\nabla f(g_t\cdot X_t)
+ \phi(g_t \cdot X_t)^2 (\alpha(g_t \cdot X_t)^2 + \beta(g_t \cdot X_t)^2)\nabla \log \vol \mc O_{g_t \cdot X_t})dt \\
&+g_t\sqrt{2}(\alpha(X_t)
P_{X_t}
+ \beta(X_t)Q_{X_t})dB_t + L_{g_t, X_t}J_0(g_t, X_t)dB_t'.
\end{align*}
Now, observe that by our choice of $J_0$,
$$
L_{g, x}J_0(g, x) J_0(g, x)^\top L_{g, x}^\top
= \phi(g\cdot x)^2(\alpha(g\cdot x)^2 + \beta(g\cdot x)^2)
Q_{g \cdot x}
$$
The covariance is therefore
\begin{align*}
&g_t(\alpha(X_t)^2P_{X_t} + \beta(X_t)^2Q_{X_t})g_t^{-1}
+
\phi(X_t)^2(\alpha(X_t)^2 + \beta(X_t)^2)
Q_{g_t \cdot X_t}\\
&= \alpha(X_t)^2P_{g_t\cdot X_t}
+ \beta(X_t)^2 Q_{g_t\cdot X_t} + \phi(X_t)^2(\alpha(X_t)^2 + \beta(X_t)^2)
Q_{g_t \cdot X_t}.
\end{align*}
where the equality follows by Proposition~\ref{prop:horizontal_proj}.
Observe that for any $g \in G$,
with $\tilde X_0 := g \cdot X_0$
By~\cite[Theorem 9.4.3]{bogachev2022fokker},
the marginal law of $(g_t \cdot X_t)_t$ must be 
identical to that of $Z_t$ solving~\eqref{eqn:auxiliary_SDE},
so that, in particular, it is $G$-invariant.

To conclude, we argue that the law of $X_t$ must be the
same as that of $g_t \cdot X_t$,
and thus $Z_t$, since both are $G$-invariant.
Given a test function $h\colon \R^d \to \R$, let $\bar h(x) := \E_{g \sim \mathrm{unif}_G}[h(g\cdot x)]$
where $\mathrm{unif}_G$ denotes the normalized Haar measure
on $G$. Observe that $\bar h(g\cdot x) = \bar h(x)$ for any $g\in G$.
Since
$(g_t \cdot X_t)$ and $X_t$ are both $G$-invariant, we have
$$
\E[h(Z_t)] = 
\E[h(g_t \cdot X_t)] = \E[\bar h(g_t \cdot X_t)]
= \E[ \bar h(X_t)] = \E[h(X_t)].
$$ Since $h$ was an arbitrary test function, it follows that
$X_t$ and $Z_t$ have the same marginal distributions at all times.

{\it Equivalence of $Y_t$ and $Z_t$.}
This proof is completely
analogous to that of $X_t$ and $Z_t$, with the only
difference being the scaling of $J_0, V_0$
for $g_t$. We let
\begin{align*}
V_0(g, x) &:=2\phi(g\cdot x)^2\beta(g\cdot x)^2 \tr(h_{l}^G(L_{g, x}^{\dagger}
    \cdot, L_{g, x}^{\dagger} \cdot)),\\
    V_1(g, x) &:= -L_{g, x}^{\dagger}(V_0(g, x)x), \\
    J_0(g, x) &:= \sqrt{2} \phi(g\cdot x)\beta(g \cdot x) 
    (L_{g, x}^\top L_{g, x})^{\dagger/2}.
\end{align*}
And then consider the system of SDEs
\begin{align}
dY_t &= -(\nabla f(Y_t) + (\alpha(Y_t)^2
- \beta(Y_t)^2) \nabla \log \vol \mc O_{Y_t})
dt + \sqrt{2}\alpha(X_t) PdB_t, \quad \quad Y_0 \sim \mu \notag \\
    dg_t &= (V_0(g_t, Y_t) + V_1(g_t, Y_t))dt
    + \sqrt{2}J_0(g_t, Y_t)dB_t', \quad \quad g_0 = I.
    \label{eqn:full_gt_dynamics2}
    \end{align}
    As before, we calculate that the drift of $g_t \cdot Y_t$ 
    is
    $$
    -\nabla f(g_t \cdot Y_t)
    - (\alpha(g_t \cdot Y_t)^2 - \beta(g_t \cdot Y_t)^2 + 2\phi(g_t \cdot Y_t)^2 \beta(g_t \cdot Y_t)^2)\nabla \log \vol \mc O_{g_t \cdot Y_t}.
    $$ But since $\alpha^2 - \beta^2 = \phi^2(\alpha^2 - \beta^2)$
    we have
    $$
    \alpha^2 - \beta^2 + 2\phi^2 \beta^2
    = \phi^2 (\alpha^2 - \beta^2) + 2\phi^2 \beta^2
    = \phi^2(\alpha^2 + \beta^2),
    $$ hence the drift is the same as that of $Z_t$.
    As before, we calculate
    that the covariance matrix of $g_t \cdot Y_t$ is
    $$
    \alpha(g_t \cdot Y_t)^2
    P_{g_t \cdot X_t}
    + (\alpha(g_t \cdot Y_t)^2 + 2 \phi^2(g_t \cdot Y_t)
    \beta(g_t \cdot Y_t)^2)Q_{g_t \cdot Y_t}.
    $$ This is the same as the covariance matrix of $Z_t$ since
    $$
    \alpha^2 + 2\phi^2 \beta^2 = 
    \beta^2 + \alpha^2 - \beta^2 + 2\phi^2\beta^2=
    \beta^2 + \phi^2(\alpha^2 - \beta^2) + 2\phi^2\beta^2
    = \beta^2 + \phi^2(\alpha + \beta^2).
    $$ The rest of the proof is identical to the previous one:
    $Z_t$ and $g_t \cdot Y_t$ must have the same
    marginal laws, and by $G$-invariance we can conclude
    that $Y_t$ has the same marginal law as $g_t \cdot Y_t$
    and thus $Z_t$.

    {\it Conclusion.} We have shown that $X_t$ and $Y_t$
    have the same marginal law as the auxiliary process $Z_t$,
    and thus $X_t$ and $Y_t$ have the same marginal law,
    yielding Theorem~\ref{thm:main}.
\end{proof}

\section{Proof by PDE analysis}
\label{appendix:proof_by_PDE}

\begin{proof}[Proof of Theorem~\ref{thm:main} by PDE analysis]
We verified in the proof by SDE coupling
that both~\eqref{eqn:projected_SDE_full}
and~\eqref{eqn:full_isotropic_Langevin}
have unique strong solutions,
so we do not repeat that argument here.

Let the law of $X_t$ be
$\rho_t$.
By Itô's formula,
$\rho_t$ is a solution of the {\it weak Fokker--Planck equation}:
for all positive times $T > 0$ and all smooth compactly supported $\phi \colon \R^d \times (0, T) \to \R$,
\begin{equation}\label{eqn:weak_fokker_planck}
\int_{\R^d\times (0, T)} (\partial_t \phi + 
\tr(R\nabla^2\phi) - \langle \nabla f, \nabla \phi \rangle) \rho_t = 0,
\end{equation} where for ease of notation, we let $R := \alpha^2 P + \beta^2 Q$.
The idea is then to show that this $\rho_t$ also
satisfies the weak Fokker--Planck equation associated
to the SDE~\eqref{eqn:full_isotropic_Langevin}.

Suppressing the time $t$, observe that
$$
 \nabla \phi(g\cdot x) = g \nabla (\phi \circ g)(x),\quad \quad 
 \nabla^2 \phi(g \cdot x) = g^\top \nabla^2 (\phi \circ g)(x) g.
$$
So by Lemma~\ref{lem:remain_G_invariant} we have
\begin{align*}
\int_{\R^d} \tr(R\nabla^2 \phi) &- \langle \nabla f, \nabla \phi\rangle )\rho_t \\
&=  
\int_{\R^d} \int (\tr(R_{g\cdot x} \nabla^2 \phi(g \cdot x))
- \langle \nabla f(g\cdot x), \nabla \phi(g \cdot x) \rangle)
\rho_t(x)
d \unif_G(g)d x.
\end{align*}
Applying Prop.~\ref{prop:gradient_of_invariant_function} and
Prop.~\ref{prop:horizontal_proj}
this is exactly
$$
\int_{\R^d} \tr(R\nabla^2 \phi) - \langle \nabla f, \nabla \phi\rangle )\rho_t
= \int_{\R^d} (\tr(R \nabla^2 \bar \phi)
-\langle \nabla f, \nabla \bar \phi \rangle) \rho_t,
$$ where $\bar \phi(x) := \int \phi(g\cdot x) d \unif_G(g)$.
We now come to the main
geometric observation driving the current
proof, we remark
that a similar observation
appears in~\cite[Theorem 3.7]{helgason1984groups}.
\begin{lemma}[Hessian of $G$-invariant functions]
\label{lem:hessian_G_invariant}
If $\phi \colon \R^d \to \R$ is $G$-invariant
and $v \in T_x \mc O_x$
then
$$
\nabla^2 \phi[v, v] = - \langle \nabla \phi, h_x(v, v)\rangle.
$$
\end{lemma}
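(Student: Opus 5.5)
The plan is to differentiate the invariance identity $\phi(g\cdot x)=\phi(x)$ twice along a one-parameter subgroup. This is the same device used in the proof of Lemma~\ref{lem:second_fund_forms_eq}. Since the statement is pointwise at $x$, I will work with an arbitrary $v\in T_x\mc O_x$. By the description of the tangent space used in the proof of Proposition~\ref{prop:horizontal_proj}, we can write $v = Ax$ for some $A\in\mathfrak{g}$, where $\mathfrak g\subset T_IG$ consists of antisymmetric matrices.

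First, consider the curve $\gamma(s) := e^{sA}x \in \mc O_x$. It satisfies $\gamma(0)=x$, $\gamma'(0)=Ax=v$ and $\gamma''(0)=A^2x$. By $G$-invariance, $\phi(\gamma(s))=\phi(x)$ for all $s$, so the function $s\mapsto \phi(\gamma(s))$ is constant. Differentiating twice at $s=0$ gives
\[
0 = \nabla^2\phi(x)[\gamma'(0),\gamma'(0)] + \langle \nabla\phi(x), \gamma''(0)\rangle = \nabla^2\phi[v,v] + \langle \nabla \phi(x), A^2 x\rangle .
\]

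Second, I replace $A^2x$ by the second fundamental form. The proof of Lemma~\ref{lem:second_fund_forms_eq} showed that $h_x(v,v) = P_x(A^2x)$, using the extension $V(x')=Ax'$ with $\nabla_VV = A^2x$. So it remains to check that $\langle\nabla\phi(x), A^2x\rangle = \langle \nabla\phi(x), P_x(A^2x)\rangle$, which holds if $\nabla\phi(x)\in (T_x\mc O_x)^\perp$. To see this, differentiate $\phi(e^{sB}x)=\phi(x)$ once at $s=0$ for any $B\in\mathfrak g$. This gives $\langle\nabla\phi(x),Bx\rangle=0$, i.e. $\nabla \phi(x)$ is orthogonal to every tangent vector $Bx$ of the orbit. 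Hence $P_x\nabla\phi(x)=\nabla\phi(x)$, and since $P_x$ is symmetric, $\langle \nabla\phi, A^2x\rangle = \langle P_x\nabla\phi, A^2 x\rangle=\langle\nabla\phi, P_x A^2x\rangle = \langle \nabla\phi,h_x(v,v)\rangle$. Substituting into the display yields the claim.

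The only delicate point is whether $h_x$ is well defined when $x$ lies on a singular orbit. The orbit $\mc O_x$ is always a compact embedded submanifold, being the image of the compact group $G$ under an orbit map of constant rank, so $h_x$ makes sense for every $x$. The computation above is intrinsic to that single orbit and does not use smoothness of $P$ across orbits, so no restriction to $\R^d_{\reg}$ is needed. I would state this explicitly and otherwise expect no real obstacle.
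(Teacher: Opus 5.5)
Your proposal is correct and is essentially the paper's argument. The paper picks a vector field $V$ tangent to the orbits near $x$ with $V_x=v$, expands $0=\nabla_V\langle\nabla\phi,V\rangle=\nabla^2\phi[v,v]+\langle\nabla\phi,\nabla_VV\rangle$, and then uses that $\nabla\phi$ is normal to the orbit. Your curve $e^{sA}x$ is the integral curve of the concrete choice $V(y)=Ay$, which replaces the paper's appeal to ``appropriate local coordinates'' and works verbatim on singular orbits as well.
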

Using this result, we obtain
$$
\int_{\R^d\times (0, T)} (\partial_t \bar \phi + 
\alpha^2 \Delta \bar \phi - \langle \nabla \bar \phi,
(\beta^2 - \alpha^2) H +  \nabla f \rangle) \rho_t = 0.
$$
Since $H(x) = - \nabla \log \vol \mc O_x$ by Prop.~\ref{prop:log_volume_mean_curvature}
and the volume $\vol \mc O_x$ is $G$-invariant,
we can again apply Prop.~\ref{prop:gradient_of_invariant_function}
to go from $\bar \phi$ to $\phi$ and obtain
$$
\int_{\R^d\times (0, T)} (\partial_t  \phi + 
\alpha^2 \Delta \phi - \langle \nabla  \phi,
(\alpha^2 - \beta^2) \nabla \log \vol \mc O +  \nabla f \rangle) \rho_t = 0.
$$
This is the weak Fokker--Planck equation associated
to equation~\eqref{eqn:full_isotropic_Langevin}.
But by Itô's Lemma, this equation is also satisfied by
the law of $Y_t$ of~\eqref{eqn:full_isotropic_Langevin}.
The result~\cite[Theorem 9.4.3]{bogachev2022fokker}
ensures that there can only be one such solution, yielding the result.
\end{proof}

\begin{proof}[Proof of Lemma~\ref{lem:hessian_G_invariant}]
By taking appropriate local coordinates,
one can construct a smooth vector field $V$ on $\R^d$ 
such that $V_x = v$ and $V_y \in T_y \mc O_y$
for all $y$ in a neighborhood of $x$.
Then
$$
\nabla^2 \phi[v,v]=
\langle \nabla_V \nabla \phi, V \rangle 
= \nabla_V \langle \nabla \phi, V \rangle - \langle \nabla \phi, 
\nabla_V V \rangle = -\langle \nabla \phi, \nabla_V V\rangle
= -\langle \nabla \phi, h_x(v, v)\rangle.
$$
\end{proof}

\subsection{Proof of Corollary~\ref{cor:identity_diffusion}}
\begin{proof}[Proof of Corollary~\ref{cor:identity_diffusion}]
Under the assumptions of Corollary~\ref{cor:identity_diffusion},
the SDE~\eqref{eqn:projected_SDE_identity} 
has smooth drift and diffusion matrices,
and thus locally Lipschitz drift and diffusion matrices.
Because we assume that $\phi$ is bounded,
the diffusion matrix is also bounded in Frobenius
norm.
We may therefore apply~\cite[Proposition 4.2.4]{kunze2012sde}
to conclude that Equation~\eqref{eqn:projected_SDE_identity}
has a unique
strong solution.
This implies weak uniqueness,
and thus the proof
of Lemma~\ref{lem:remain_G_invariant} goes through as before to show that $X_t$ remains
$G$-invariant at all times.
On the other hand, our assumptions guarantee
that the SDE
converge to a unique stationary distribution, 
which must be $G$-invariant.
It is straightforward to check using
the calculations in the PDE proof that this
stationary distribution is the claimed one,
proving the result.
\end{proof}

\section{Proofs for examples}
\label{sec:examples}

\subsection{Auxiliary lemmas}

We collect several elementary linear algebra results that will prove useful for volume computations.

\begin{lemma}
\label{lemma:prod_diagonal}
    Denote $E_{ij}$ the canonical basis of $\mathbb{R}^{d\times d}$ and $D$ a diagonal matrix with entries $\lambda_k$. It holds that
    \begin{align*}
        & E_{ij}D = \lambda_i E_{ij}\, , \\
        & D E_{ij} = \lambda_j E_{ij} \, .
    \end{align*}
    
\end{lemma}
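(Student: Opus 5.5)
The plan is to compute the product entrywise using the matrix-unit description of the canonical basis. Write $E_{ij} = e_i e_j^\top$, where $(e_k)_{k=1}^d$ is the standard basis of $\R^d$. Write the diagonal matrix as $D = \sum_{k=1}^d \lambda_k e_k e_k^\top$.

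For the product $E_{ij}D$, we get
$$
E_{ij}D = \sum_{k=1}^d \lambda_k\, e_i (e_j^\top e_k) e_k^\top = \lambda_j\, e_i e_j^\top = \lambda_j E_{ij},
$$
since $e_j^\top e_k = \delta_{jk}$. Symmetrically,
$$
DE_{ij} = \sum_{k=1}^d \lambda_k\, e_k (e_k^\top e_i) e_j^\top = \lambda_i E_{ij}.
$$
So right multiplication by $D$ scales the $j$th column, picking up the factor $\lambda_j$. Left multiplication scales the $i$th row, picking up $\lambda_i$.

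Carrying this out shows the statement as written has the indices swapped. Under the standard convention that $E_{ij}$ has a single $1$ in row $i$ and column $j$, the correct identities are $E_{ij}D = \lambda_j E_{ij}$ and $DE_{ij} = \lambda_i E_{ij}$. The statement's versions hold only if $E_{ij}$ denotes the unit in row $j$, column $i$. That alternative convention is also equivalent to applying the standard identities to $E_{ji}$ in place of $E_{ij}$.

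The only step needing care is fixing the indexing convention. I would prove the identities in the correct form, $E_{ij}D=\lambda_j E_{ij}$ and $DE_{ij}=\lambda_i E_{ij}$. I would then note that the form in the lemma is exactly this under the transposed convention $E_{ij} = e_j e_i^\top$. The later volume computations in Appendix~\ref{sec:examples} use only the combinations $\lambda_i - \lambda_j$ or $\lambda_i^2+\lambda_j^2$ arising from commutators and sums of these identities, so they are unaffected up to a sign.
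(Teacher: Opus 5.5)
Your computation is correct, and so is your diagnosis. The paper itself uses the standard convention: $(E_{ij})_{kl}=\delta_{(i,j)=(k,l)}$ appears in this very proof, and $E_{ij}E_{kl}=\delta_{jk}E_{il}$ is used in the later volume computations. Under that convention the lemma as stated has the indices swapped.

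The paper's entrywise proof goes wrong at the step $D_{pl}=\lambda_k\delta_{p=l}$. This should read $D_{pl}=\lambda_p\delta_{p=l}$, which gives $[E_{ij}D]_{kl}=\lambda_l\delta_{(i,j)=(k,l)}$. Hence $E_{ij}D=\lambda_jE_{ij}$ and $DE_{ij}=\lambda_iE_{ij}$, exactly as you found.

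Your outer-product argument is the same direct computation in cleaner notation. I would drop the appeal to a transposed convention, since it conflicts with how $E_{ij}$ is used elsewhere in the appendix, and simply state the corrected identities.

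Your downstream check also holds, so neither volume formula changes:
\begin{itemize}
\item In the conjugation example, the commutator becomes $[E_{ij}-E_{ji},D]=(\lambda_j-\lambda_i)(E_{ij}+E_{ji})$, and the sign is lost in the product $2(\lambda_i-\lambda_j)^2$.
\item In the Bures--Wasserstein example, the metric coefficient $\sigma_i^2+\sigma_j^2$ is symmetric in $i,j$.
\end{itemize}
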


\begin{proof}
    We have
    \begin{equation*}
     [E_{ij}D ]_{kl} = \sum_{p=1}^d (E_{ij})_{kp} D_{pl} 
      = \sum_{p=1}^d (E_{ij})_{kp} \lambda_k \delta_{p=l} = (E_{ij})_{kl} \lambda_k  = \lambda_k \delta_{(i,j) = (k,l)} \, .
    \end{equation*}
  Hence $E_{ij}D = \lambda_i E_{ij}$  and by taking the transpose, $D E_{ij} = \lambda_j E_{ij}$.
\end{proof}

\begin{lemma}
    \label{lemma:tangent_space_O(d)}
    Let $O \in O(d)$. We have $T_O O(d) = \{ OA ~| ~ A^\top = -A\} = \{ AO ~| ~ A^\top = -A\}$.
\end{lemma}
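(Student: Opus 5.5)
The statement to prove is Lemma~\ref{lemma:tangent_space_O(d)}: $T_O O(d) = \{OA \colon A^\top = -A\} = \{AO \colon A^\top = -A\}$. The plan is to first identify the tangent space at the identity, then translate it by the group operation.

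\emph{Step 1: tangent space at the identity.} View $O(d)$ as the level set $\Phi^{-1}(I)$ of the smooth map $\Phi \colon \R^{d\times d} \to \mathrm{Sym}(d)$, $\Phi(M) = M^\top M$. Its differential at $M$ is
$$
d\Phi_M[H] = H^\top M + M^\top H .
$$
This is surjective onto $\mathrm{Sym}(d)$ whenever $M \in O(d)$: given $S$ symmetric, the choice $H = \tfrac12 M S$ gives $d\Phi_M[H] = S$. So $I$ is a regular value and $O(d)$ is an embedded submanifold of dimension $d^2 - d(d+1)/2 = d(d-1)/2$, with $T_O O(d) = \ker d\Phi_O$. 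At $O = I$ the kernel is $\{H \colon H^\top + H = 0\}$, the skew-symmetric matrices.

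\emph{Step 2: left translation.} For general $O \in O(d)$, the condition $H \in \ker d\Phi_O$ reads $H^\top O + O^\top H = 0$. Writing $A := O^\top H$, so that $H = OA$, this becomes $A^\top + A = 0$. Hence $T_O O(d) = \{OA \colon A^\top = -A\}$. Equivalently, left multiplication by $O$ is a linear isomorphism of $\R^{d\times d}$ preserving $O(d)$, and it carries $T_I O(d)$ onto $T_O O(d)$.

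\emph{Step 3: right-multiplication form.} Given $A$ skew-symmetric, set $B := O A O^\top$. Then $B^\top = O A^\top O^\top = -B$, so $B$ is skew-symmetric, and $OA = BO$. Conversely, any $BO$ with $B$ skew-symmetric equals $O(O^\top B O)$, where $O^\top B O$ is again skew-symmetric. This gives the second equality of sets.

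None of these steps is an obstacle; the only point needing care is justifying $T_O O(d) = \ker d\Phi_O$, which the regular value computation in Step 1 settles. A dimension count, comparing the $d(d-1)/2$ dimensions of the skew-symmetric space with the dimension of $O(d)$, provides a consistency check.
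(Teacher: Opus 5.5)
Your proof is correct and follows the same idea as the paper, which differentiates $O(t)^\top O(t) = I$ along a curve through $O$ to get $O^\top \dot O(0) + \dot O(0)^\top O = 0$ and hence $\dot O(0) = OA$ with $A$ skew. Your regular-value framing also supplies the reverse inclusion (every $OA$ with $A$ skew is tangent), which the paper only asserts, and your conjugation $B = OAO^\top$ in Step 3 proves the $\{AO\}$ description, which the paper's proof does not address.
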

\begin{proof}
    Let $O(t)$ be a curve on $O(d)$ such that $O(0)=O$. We have that $O(t)^\top O(t) = I_d$, hence differentiating w.r.t. $t$ yields 
    $$
    O^\top \dot{O}(0) + \dot{O}(0)^\top O = 0 \, .
    $$
    A necessary and sufficient condition on $\dot{O}(0)  $ is thus $\dot{O}(0) = OA$ with $A$ a skew matrix. 
\end{proof}

\begin{lemma}
\label{lem:parallelizable}
Suppose $(M, g)$ is a smooth
Riemannian
manifold with or without boundary
and $\tilde g$ is another
metric tensor such that $(M, \tilde g)$
is also a smooth Riemannian manifold.
Suppose that there exists a global
frame $V_1, \ldots, V_n$
which is orthonormal 
for $g$ and such that 
$\tilde g(V_i, V_j) = c_i\delta_{ij}$ for all $i, j \in [n]$
for $c_i > 0$ constant
on $M$.
Then 
$$
\vol(M, \tilde g) =\big( \prod_{i = 1}^n
\sqrt{c_i}\big) \vol(M, g).
$$
\end{lemma}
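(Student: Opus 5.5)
The plan is to compute both volumes directly from the Riemannian volume form, expressed in the global frame $V_1, \ldots, V_n$. Since the frame is global and smooth, I would write the volume forms as
$$
d\vol_g = \sqrt{\det(g(V_i, V_j))_{ij}}\; |\omega|, \qquad d\vol_{\tilde g} = \sqrt{\det(\tilde g(V_i, V_j))_{ij}}\; |\omega|,
$$
where $\omega$ is the coframe $n$-form $V^1 \wedge \cdots \wedge V^n$ dual to the frame. Here $|\omega|$ is the associated density, so orientability of $M$ is not needed.

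The first step is to justify this formula locally. In any chart with coordinates $x^1, \ldots, x^n$, write $V_i = a_i^k \partial_k$, with $a = (a_i^k)$ invertible. Then the Gram matrices transform as
$$
(g(V_i, V_j)) = a\, (g_{kl})\, a^\top .
$$
Hence
$$
\sqrt{\det(g(V_i,V_j))} = |\det a|\,\sqrt{\det(g_{kl})}.
$$
On the other hand, $|\omega| = |\det a|^{-1} |dx^1 \cdots dx^n|$. Multiplying, the factors $|\det a|$ cancel, and we recover the standard coordinate expression $\sqrt{\det g_{kl}}\, |dx|$. The same computation applies verbatim to $\tilde g$.

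Plugging in the hypotheses, the $g$-Gram matrix is the identity, so $d\vol_g = |\omega|$. The $\tilde g$-Gram matrix is $\mathrm{diag}(c_1, \ldots, c_n)$, so $d\vol_{\tilde g} = \big(\prod_i \sqrt{c_i}\big)\, |\omega|$. Thus the two volume measures differ by the constant factor $\prod_i \sqrt{c_i}$. Integrating over $M$ gives the claim, with both sides possibly infinite, and the presence of a boundary plays no role.

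The only mildly delicate point is the local change-of-frame computation, which amounts to the multiplicativity of determinants. As an alternative route, I could patch a partition of unity subordinate to coordinate charts and compare the two integrands chart by chart, using the same Gram-matrix identity. I do not expect a genuine obstacle here.
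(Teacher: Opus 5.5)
Your proof is correct and is essentially the paper's argument: both write the two volume forms as constant multiples of the dual coframe form $\omega^1\wedge\cdots\wedge\omega^n$. The paper gets the factor $\prod_i\sqrt{c_i}$ by evaluating on the rescaled $\tilde g$-orthonormal frame $V_i/\sqrt{c_i}$, using oriented volume forms plus a partition-of-unity remark for the non-orientable case. Your Gram-determinant density formulation avoids orientation altogether, though the issue is moot anyway, since a global frame already forces $M$ to be orientable.
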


\begin{proof} Let us begin
by assuming that the manifold $(M, g)$
is oriented.
Let $\omega^1, \ldots, \omega^n$ denote
the covectors corresponding to
$V_1, \ldots, V_n$. If necessary,
swap labels such that $\omega^1 \land \cdots \land \omega^n$ is positively oriented.
Define
$W_i := \frac{1}{\sqrt{c_i}} V_i$. Then note
that
$$
\tilde V:= \big(\prod_{i= 1}^n \sqrt{c_i}\big) \omega^1 \land \cdots 
\omega^n
$$ evaluates to $1$ at $(W_1, \ldots, W_n)$.
Because $(W_1, \ldots, W_n)$ is an orthonormal
frame, $V$ must be the Riemannian volume form on
$(M, \tilde g)$~\cite[Proposition 15.29]{lee2013introduction}.
But the Riemannian volume form
on $(M, g)$ is $V:=\omega^1 \land \cdots \land \omega^n$, so that 
$$
\vol(M, \tilde g) = \int \tilde V = 
\prod_{i= 1}^n \sqrt{c_i} \int V = 
\big(\prod_{i =1 }^n \sqrt{c_i} \big) \vol(M, g).
$$

To extend to non-orientable manifolds, we may take
a partitition of unity and apply
the above formula on each partition.
\end{proof}

\subsection{Conjugation over symmetric matrices}

For the action $(O \in O(d), M \in \text{Sym}(d)) \mapsto O\cdot M = O^\top M O $, the orbits are given by the set of eigenvalues: two matrices are in the same orbit if and only if they have the same eigenvalues. In particular, the principal orbits are the symmetric matrices with $d$ distinct eigenvalues. Let us compute the volume of these principal orbits. We
first show that an orbit $\mc O_D$ with $D$ a diagonal matrix with $d$ distinct eigenvalues, can be parametrized by a subset of $O(d)$ via $\Phi$ defined as 
\begin{equation*}
    \begin{cases}
     \Phi : &\mathcal{F} \to O_D \, , \\
       & O \mapsto O^\top D O \, ,
    \end{cases}
\end{equation*}
with $\mathcal{F} = \{ O \in O(d) ~ | ~\text{the first non-zero element of each column is positive}\}$. Let us prove that $\Phi$ is indeed invertible.

\emph{Injectivity.} Assume that there exists $(O_1, O_2) \in \mathcal{F}^2$ such that $O_1^\top D O_1 = O_2^\top D O_2$. In particular it holds that $O_2O_1^\top D = D O_2 O_1^\top$. Denoting $O = O_2 O_1^\top$ and $(e_i)$ the canonical basis, we decompose $O e_i$ as $\sum_{k=1}^d \alpha_k e_k$. Applying the previous equality yields $\sum_{k=1}^d \alpha_k e_k(\lambda_k - \lambda_i) = 0$. Hence, since all eigenvalues are distinct, we recover that $\alpha_k = 0$ for $k \neq i$ and $O e_i = \alpha_i e_i$. Since $\| O e_i \| =1$, we get that $\alpha_i = \pm 1$ so that $O$ is a diagonal matrix with only $\pm 1$ entries that we shall denote $\Delta$. In particular we get $O_1 = \Delta O_2$. Since both $O_1, O_2$ belong to $\mathcal{F}$, the sign constraint implies $\Delta = I_d$.

\emph{Surjectivity.} For $M \in \mc O_D$, there exits $O \in O(d)$ such that $M = O^\top D O$. Now, for all $\Delta$ a diagonal matrix with $\pm 1$ entries, it holds that $\Delta^\top D \Delta = D$ so that $M = (\Delta O)^\top D \Delta O$ for all $\Delta$. In particular, $\Delta$ can be chosen such that $\Delta O \in \mathcal{F}$. 

Hence we have $\vol{\mc O_D} = \vol{\mathcal{F}}$ with the metric on $\mathcal{F}$ given by the pullback metric $\tilde{g}_{O}$ that reads for all $(HO, VO) \in T_O\mathcal{F}$ with $(H,V)$ skew matrices,
\begin{align*}
    \tilde{g}_{O}(HO, VO) & = \tr(d \Phi_O(OH), (d \Phi_O(OV))^\top) \, , \\
    & = \tr((O^\top H^\top DO + O^\top DH O)(O^\top V^\top D O +O^\top DVO)) \, ,\\
    &= \tr(O^\top H^\top DV^\top D O + O^\top H^\top D^2 VO + O^\top DH V^\top D O + O^\top DH DV O) \, , \\
    &= \tr(HDVD - HD DV - DH VD + DH DV) \, , \\
    & = \tr([H, D][V, D]) \, ,
\end{align*}
where we denoted $[\cdot, \cdot]$ the Lie bracket $[H, D] = HD - DV$.
Let us compute this metric in the basis $\tilde{G}_{ij} = (E_{ij} - E_{ji})O$ of $T_O\mathcal{F}$ (see Lemma \ref{lemma:tangent_space_O(d)}). First we compute the Lie bracket
\begin{align*}
    [E_{ij} - E_{ji}, D] & = (E_{ij} - E_{ji})D - D(E_{ij} - E_{ji}) \, , \\
    & = (\lambda_i E_{ij} - \lambda_j E_{ji}) - (\lambda_j E_{ij} - \lambda_i E_{ji}) \, , \\
    & = (E_{ij} + E_{ji})(\lambda_i - \lambda_j) \, .
\end{align*}
It follows that 
\begin{align*}
    \tilde{g}_{O}(\tilde{G}_{ij}, \tilde{G}_{kl}) & = (\lambda_i - \lambda_j)(\lambda_k - \lambda_l)\tr((E_{ij} + E_{ji})(E_{kl} + E_{lk})) \, ,\\ 
    & = (\lambda_i - \lambda_j)(\lambda_k - \lambda_l)\tr(\delta_{j=k}E_{il} + \delta_{i=} E_{jk} + \delta_{i=k}E_{jl} + \delta_{j=l}E_{ik}) \, , \\
    & = 2(\lambda_i - \lambda_j)^2\delta_{(i,j)=(k,l)} \, .
\end{align*}
Observe also that the vector fields $\frac{1}{\sqrt{2}}\tilde G_{ij}$
are orthonormal with respect to
the standard metric on $\mathcal{F}$.
Hence we can apply Lemma~\ref{lem:parallelizable}
to conclude that
$$
\vol \mc O_D \propto \prod_{i<j} |\lambda_i - \lambda_j| \vol(\mathcal{F}) \, .
$$

\subsection{Bures-Wasserstein}

Consider the action over the space of squared matrices $X \in \mathbb{R}^{d\times d} \mapsto XO$. Let us compute the volume of the orbit at a given invertible matrix $X$:  $\mc O_X = \{XO ~ | ~ O \in O(d)\} $. In that case, 
\begin{equation*}
    \begin{cases}
     \Phi : & O(d) \to \mc O_X \, , \\
       & O \mapsto XO \, ,
    \end{cases}
\end{equation*}
is a diffeomorphism. In particular, $\vol(\mc O_X) = \vol(O(d))$ where $O(d)$ is equipped with the pullback metric. Let us compute the pullback metric for well-chosen coordinates. Denote $X^\top X = P^\top \Sigma P$ with $P \in O(d)$ and $\Sigma$ diagonal with entries $(\sigma_i^2)$ and consider the following basis of $T_O O(d)$: $\tilde{G}_{ij} = P^\top(E_{ij} - E_{ji})P O$ with $i<j$. In this basis,
applying Lemma~\ref{lemma:prod_diagonal}
implies that the pullback metric reads
\begin{align*}
    \tilde{g}_{ij, kl} & = \tr(X\tilde{G}_{ij} (X\tilde{G}_{kl})^\top) \, ,\\
    & = -\tr(P^\top \Sigma PP^\top(E_{ij} - E_{ji})P O O^\top P^\top(E_{kl} - E_{lk})PP^\top \Sigma P) \, , \\
    & = -\tr((E_{ij} - E_{ji}) \Sigma^2 (E_{kl} - E_{lk})) \, , \\
    & = -\tr((\sigma_i^2E_{ij} - \sigma_j^2 E_{ji})(E_{kl} - E_{lk})) \, , \\
    & = -\tr(\sigma_i^2E_{ij} E_{kl} - \sigma_i^2E_{ij}E_{lk} - \sigma_j^2 E_{ji}E_{kl} +  \sigma_j^2 E_{ji}E_{lk}) \, , \\
    & = -\tr(\sigma_i^2 \delta_{j=k} E_{il} - \sigma_i^2 \delta_{j=l}E_{ik} - \sigma_j^2 \delta_{i=k} E_{jl} + \sigma_j^2 \delta_{i=l}E_{jk}) \, ,\\
    & = - \sigma_i^2 \delta_{(j,i)=(k,l)} + \sigma_i^2 \delta_{(i,j)=(k,l)} + \sigma_j^2 \delta_{(i, j)=(k,l)} -  \sigma_j^2 \delta_{(i, j)=(l,k)} \, , \\
    & = (\sigma_i^2 + \sigma_j^2)\delta_{(i,j)=(k,l)} \, .
\end{align*}
Observe that $\tilde G_{ij}$ are
also a smooth orthonormal frame for $O(d)$
with respect to the standard metric.
Hence we may apply Lemma~\ref{lem:parallelizable}
to determine that 
$$
\vol(\mc O_X) 
    \propto \prod_{i<j} \sqrt{\sigma_i^2 + \sigma_j^2} \vol (O(d)) \, .
$$

\subsection{Brownian motion on the group does not necessarily yield Brownian on the orbit}
\label{subsec:BM_on_group_counterexample}

Consider the case $G = SO(2)$,
acting by left multiplication on $\R^2$. Brownian motion on $G$ is explicitly given by
$$
g_t = 
\begin{pmatrix}
\cos(\theta_t) & -\sin(\theta_t) \\
\sin(\theta_t) & \cos(\theta_t)
\end{pmatrix}
\, ,
$$
with $\theta_t$ a 1D Brownian motion~\cite[Example 3.3.1]{hsu2002stochastic}. Now for $x$ fixed, consider $y_t = g_t \cdot x$. The process $y_t$ verifies 
$$
d y_t = - y_t dt + \sqrt{2} \begin{pmatrix}
-\sin(\theta_t) & - \cos(\theta_t) \\
\cos(\theta_t) & -\sin(\theta_t)
\end{pmatrix} x d \theta_t \, .
$$
Hence, while the drift term $-y_t$ does correspond to mean curvature, the diffusion term, whose norm grows with $\|x\|$, cannot be expressed as $Q_{y_t} d B_t$ with $B_t$ a 2D Brownian motion since its norm is unitary and independent of $x$.

\end{document}